\theoremstyle{plain} \newtheorem{thm}{Theorem}[section]
\newtheorem{cor}[thm]{Corollary} \newtheorem{prop}[thm]{Proposition}
\newtheorem{lemma}[thm]{Lemma}
\newtheorem*{namedtheorem}{\theoremname}
\newcommand{\theoremname}{testing}
\newenvironment{named}[1]{\renewcommand{\theoremname}{#1}\begin{namedtheorem}}{\end{namedtheorem}}
\theoremstyle{definition}
\theoremstyle{remark}
    \DeclareMathOperator{\Forb}{Forb}
\pgfplotsset{compat=1.15}
\begin{document}

\author{Nicholas Abell}
\address{Department of Mathematics\\ 
Furman University\\ 
Greenville, SC 29613}
\email{abelni8@furman.edu}

\author{Elizabeth McDermott}
\address{Department of Mathematics\\ 
Furman University\\ 
Greenville, SC 29613}
\email{mcdeel3@furman.edu}

\author{Christian Millichap}
\address{Department of Mathematics\\ 
Furman University\\ 
Greenville, SC 29613}
\email{christian.millichap@furman.edu}

\title{Projective Planar Cartesian Products of Graphs}

\begin{abstract}
In this paper, we provide a complete classification of Cartesian products of graphs that embed in the projective plane. Our work requires us to determine minimal Cartesian products that are nonprojective planar, organize their essential properties to be used as constraints for projective planar embeddings, and explicitly construct projective planar embeddings for Cartesian products that satisfy these constraints. A corollary of our work shows that only six of the 35 forbidden minors for the projective plane are sufficient to classify projective planar Cartesian products. 
\end{abstract}

\maketitle

\section{Introduction}
\label{sec:Intro}

 A graph $G'$ is a \textbf{minor} of $G$ if $G'$ can be obtained by a sequence of vertex deletions, edge deletions, and edge contractions on $G$. The Graph Minor Theorem of Robertson and Seymour \cite{RoSe2004} shows that any minor closed graph property $\mathcal{P}$  exhibits a finite list of forbidden minimal minors. This means there is a finite minimal set of graphs $\Forb(\mathcal{P})$ such that a graph $G$ has property $\mathcal{P}$ if and only if $G$ has no minor in $\Forb(\mathcal{P})$. Furthermore, minimality here means that every $G \in \Forb(\mathcal{P})$ does not have $\mathcal{P}$ while every proper minor of $G$ does have $\mathcal{P}$; see \cite{Ma2017} for an introduction to minor closed properties. One such minor closed property is whether a graph $G$ \textbf{embeds} on a fixed compact connected surface $S$, that is, whether $G$ can be drawn on a surface $S$ where edges of $G$ don't cross each other. More formally,  an embedding is constructed via a continuous and one-to-one function $f: |G| \rightarrow S$, where $|G|$ represents a geometric realization of $G$; see \cite{GrTu2001} and \cite{MoTh2001} for further background on graph embeddings on surfaces.  As a result of the Graph Minor Theorem, the set $\Forb(S)$ of  forbidden minimal minors for embedding on $S$ must be finite. In this context, the Kuratowski-Wagner Theorem \cite{Ku1930}, \cite{Wa1937} can be restated as $\Forb(S^2) = \{ K_5, K_{3,3}\}$, where $S^{2}$ denotes the $2$-sphere. The only other surface with a known list of forbidden minors is the projective plane, $N_1$, where $\Forb(N_1)$ consists of $35$ elements and the completeness of this set was determined by Archdeacon in \cite{Ar1980}. We refer the reader to  Appendix A and Section 6.5 in Mohar and Thomassen \cite{MoTh2001} for this set of graphs. In this paper, we will represent the projective plane as an identification space constructed by identifying antipodal points on the boundary of a disk. Beyond the $2$-sphere and the projective plane, there currently isn't a complete list of $\Forb(S)$ for any other surface $S$. 

Since the sets $\Forb(S^2)$ and $\Forb(N_1)$ are explicitly determined, it is natural to classify when certain subclasses of graphs are either \textbf{planar} (embed in the plane or $S^2$) or \textbf{projective planar} (embed in the projective plane or $N_1$). Here, we are interested in the \textbf{Cartesian product of two graphs} $G$ and $H$, denoted $G \boxempty H$, which is defined to have vertex set $V(G) \times V(H)$ and a pair of vertices $(u,v), (u',v') \in V(G) \times V(H)$ determine an edge in $G \boxempty H$ if and only if $u = u'$ and $vv' \in E(H)$ or $uu' \in E(G)$ and $v=v'$. Such graphs admit a fiber structure that is frequently helpful for constructing embeddings and minors. A complete classification of planar Cartesian products was given by Behzad and Mahmoodian \cite{BeMa1969} and a proof can be found in \cite[Section 4.1]{ImKlRa2008}. Let $P_m$ represent a path on $m$ vertices and let $C_n$ represent a cycle on $n \geq 3$ vertices. Then their work shows that $G \boxempty H$ is planar if and only if $G \boxempty H$ is either a $P_m \boxempty P_n$, $P_m \boxempty C_n$, or $G \boxempty P_2$, where $G$ is \textbf{outerplanar}, meaning there exists an embedding of $G$ in the plane where all of the vertices of $G$ lie on the boundary of a single face.

 The straightforward classification of planar Cartesian products and the fact that $\Forb(N_1)$ is complete  motivates the need for a classification of Cartesian products of graphs that are projective planar. Our main theorem, which is stated below, provides such a classification. For visuals of graphs $B_{m}$, $Q_{m}$, $W_{m}$, and $X_{m}$, see Figure \ref{fig:Relevant graphs P3,C3 case}. We assume all graphs under consideration for this classification and the remainder of this paper are finite, simple, connected, and non-trivial (in the sense of being nonempty and not a single vertex), unless otherwise noted. 

\begin{thm}
\label{thm:maintheorem}
   The Cartesian product $G \boxempty H$ is projective planar if and only if either 
   \begin{enumerate}
       \item $G \boxempty H$ is planar,
       \item $G \boxempty H = K_{1,3} \boxempty K_{1,3}$,
       \item $G \boxempty H = G \boxempty P_3$ where $G$ is a minor of $B_{m}$ or $Q_{m}$ for some $m \in \mathbb{N}$, or
       \item $G \boxempty H = G \boxempty C_{3}$ where $G$ is a minor of $W_{m}$ or $X_{m}$ for some $m \in \mathbb{N}$. 
   \end{enumerate}
\end{thm}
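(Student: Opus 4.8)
The plan is to prove both implications, relying on two facts: projective planarity is minor-closed, and the Cartesian product is monotone under minors in each factor, i.e. if $G'$ is a minor of $G$ then $G' \boxempty H$ is a minor of $G \boxempty H$ (perform in $G \boxempty H$ the deletions and contractions corresponding to the operations realizing $G'$ in $G$, applied fiberwise). This monotonicity immediately reduces the sufficiency direction to the ``maximal'' members of each family, so I never have to treat arbitrary minors directly.

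For sufficiency, I would first note that any planar graph embeds in a disk and a disk embeds in $N_1$, so case (1) is immediate. For case (2) it suffices to exhibit one explicit crossing-free drawing of $K_{1,3} \boxempty K_{1,3}$ in the disk-with-antipodal-identification model of the projective plane. For cases (3) and (4), by minor-monotonicity it suffices to produce projective planar embeddings of the maximal products $B_m \boxempty P_3$, $Q_m \boxempty P_3$, $W_m \boxempty C_3$, and $X_m \boxempty C_3$ for every $m$; I would do this with a uniform template that places almost the entire graph in a planar region and uses the single crosscap exactly once to route the fibers that cannot be drawn planarly, most naturally organized as an induction on $m$ that appends one ``layer'' of $B_m$ (respectively $Q_m$, $W_m$, $X_m$) at each step while preserving the embedding.

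For necessity, suppose $G \boxempty H$ is projective planar. Since contracting each fiber $\{u\}\times H$ to a point exhibits $G$ as a minor of the product (and symmetrically for $H$), both $G$ and $H$ are projective planar, and by the Behzad--Mahmoodian classification \cite{BeMa1969} I may assume $G \boxempty H$ is nonplanar, placing it outside case (1). The core of the argument is to assemble a finite list $\mathcal{O}$ of minimal nonprojective-planar Cartesian products and to record, for each, the constraint it imposes: because $G \boxempty H$ is projective planar, it can contain no member of $\mathcal{O}$ as a minor. I would use these constraints first to force one factor to be exactly $P_3$, $C_3$, or $K_{1,3}$ (with $K_{1,3}$ pairing only with $K_{1,3}$, which yields case (2)), and then to characterize the other factor in the two remaining regimes. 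The cleanest way to finish is to observe that $\mathcal{B} = \{G : G \text{ is a minor of some } B_m \text{ or } Q_m\}$ and $\mathcal{W} = \{G : G \text{ is a minor of some } W_m \text{ or } X_m\}$ are themselves minor-closed, hence each has a finite set of excluded minors; one then shows $G \boxempty P_3$ is projective planar iff $G \in \mathcal{B}$, and $G \boxempty C_3$ is projective planar iff $G \in \mathcal{W}$, by checking that the product of each excluded minor with $P_3$ (respectively $C_3$) already contains a forbidden minor for $N_1$.

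The hard part will be twofold. First, determining the finite obstruction set $\mathcal{O}$ together with the excluded minors for $\mathcal{B}$ and $\mathcal{W}$, and then verifying---a finite but delicate computation---that each relevant product genuinely contains one of the $35$ members of $\Forb(N_1)$; this is also where the corollary should emerge, since only six distinct forbidden minors ought ever to be needed. Second, and more conceptually, I expect the principal obstacle to be proving the tightness of the necessity reduction: that avoiding every obstruction in $\mathcal{O}$ actually forces $G$ and $H$ into the structures $B_m$, $Q_m$, $W_m$, $X_m$, rather than merely bounding their size. This structural step---showing that the constructed families capture \emph{every} projective planar product---must rule out all possible ways of enlarging a factor while keeping the product projective planar, and I anticipate it being the most demanding portion of the proof.
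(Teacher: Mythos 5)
Your proposal follows essentially the same route as the paper: sufficiency via explicit projective planar embeddings of $K_{1,3}\boxempty K_{1,3}$ and of the maximal products $B_m \boxempty P_3$, $Q_m \boxempty P_3$, $W_m \boxempty C_3$, $X_m \boxempty C_3$, built by induction on $m$ (the paper adds one layer per step inside quadrilateral or triangular faces of the base embeddings) and transferred to minors by the fiberwise minor-monotonicity of $\boxempty$; necessity via a finite list of minimal nonprojective-planar products whose exclusion forces the structure of the factors. One difference is worth flagging: your appeal to Robertson--Seymour to conclude that the classes $\mathcal{B}$ and $\mathcal{W}$ have finite excluded-minor sets is a non-constructive detour that buys nothing, since your plan still requires identifying those excluded minors explicitly before you can check their products against $\Forb(N_1)$. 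The paper never invokes this; it proves the structure theorems directly: for $H=P_3$ it shows that a graph avoiding $K_4$, $K_{2,3}$, $I$, $K_{1,5}$, $S$, and $L$ as minors is a minor of some $B_m$ or $Q_m$, via a case analysis on whether $G$ is a tree, has exactly one cycle, or has two distinct cycles; analogously for $C_3$ (obstructions $R\boxempty C_3$ and $C_4 \boxempty C_3$) and for the case where both factors have at least four vertices (obstructions $P_4 \boxempty K_{1,3}$ and $K_{1,4}\boxempty K_{1,3}$, together with a classification of four-vertex minors). Finally, be aware that the two ``hard parts'' you defer are not routine verifications to be filled in later: the determination of the obstruction products, the exhibition of elements of $\Forb(N_1)$ inside each of them, and the tightness of the structural reduction constitute essentially the entire content of the paper's Sections 2--4, so what you have is a correct roadmap that matches the paper's architecture rather than a proof.
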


Our work requires a careful analysis of when $G \boxempty H$ is projective planar for low complexity $H$.  In Section \ref{sec:ClassificationII}, we consider when $H = P_2$ and quickly see that $G \boxempty P_2$ is projective planar exactly when $G \boxempty P_2$ is planar. Most of our work goes into the cases where $|V(H)| = 3$, which are covered in Section \ref{sec:ClassificationIII}. In these scenarios, there are infinite classes of $G \boxempty H$ that are projective planar and nonplanar. This work first requires us to determine minimal Cartesian products that are nonprojective planar and organize their essential properties. Then, we show that any $G \boxempty H$ that doesn't have these properties must meet the qualifications of (3) and (4) from Theorem \ref{thm:maintheorem}, and we construct projective planar embeddings for such graphs. A similar but less complicated analysis goes into Section \ref{sec:ClassificationIV}, where we consider $G \boxempty H$ where each factor has at least four vertices. In this scenario, $K_{1,3} \boxempty K_{1,3}$ is the only projective planar Cartesian product that is nonplanar.  

The proof of Theorem \ref{thm:maintheorem} also provides explicit lists of sufficient forbidden  minors for when a Cartesian product of graphs embeds in the projective plane. One list consists of the six graphs from the list of $35$ forbidden minors in $\Forb(N_1)$ that are sufficient for our classification; see Corollary \ref{cor:MMNPP} and Appendix \ref{sec:Appendix}. The other set consists of nine Cartesian products that serve as a sufficient set of forbidden minors to classify projective planar Cartesian products; see Corollary \ref{cor:MMNPP2}. Throughout the upcoming sections of the paper, we will reference the six graphs of $\Forb(N_1)$ used in our work via the labels given in Appendix \ref{sec:Appendix}, which correspond with the labels given in \cite{MoTh2001}.

As noted above, our work  requires several explicit constructions of embeddings and minors of Cartesian products of graphs. For these constructions, it frequently helps to use the fiber decomposition of $G \boxempty H = (G \times V(H)) \cup (V(G) \times H)$ where $G \times V(H)= \bigcup_{h\in V(H)} (G\times \{h\})$ and every $G \times \{h\}$ is  called a \textbf{G-fiber}. Likewise, $V(G) \times H$ is a union of H-fibers.  This characterization says that a Cartesian product of $G$ and $H$ is the union of its G-fibers and H-fibers. Furthermore, any two distinct G-fibers are disjoint, and similarly for H-fibers. While this characterization is not a disjoint union, it is easy to see that the intersection of any G-fiber with any H-fiber is a single vertex in $G \boxempty H$. We refer the reader to \cite{ImKlRa2008} for further background on graph Cartesian products. 
 
We would like to acknowledge the Furman University Mathematics Department for financially supporting this work via the Summer Mathematics Undergraduate Research Fellowships. We also would like to thank Doug Rall and Thomas Mattman for their helpful feedback on a draft of this paper.


\section{Classification Part I: one factor has two vertices}
\label{sec:ClassificationII}

If $H$ is simple and connected with two vertices, then  $H = P_2$. In this case, the classification of projective planar $G \boxempty P_2$ is the exact same as the planar case. 



\begin{thm}
    \label{thm:N1 embedding P2 Case}
    Let $G$ be a graph. Then the following statements are equivalent:
    \begin{enumerate}
        \item $G$ does not contain a $K_{4}$ or $K_{2,3}$ minor. 
        \item $G$ is outerplanar.
        \item $G \boxempty P_2$ is planar.
        \item $G \boxempty P_2$ is projective planar.
    \end{enumerate}
\end{thm}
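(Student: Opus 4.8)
The plan is to establish the cycle of implications $(1)\Rightarrow(2)\Rightarrow(3)\Rightarrow(4)\Rightarrow(2)$ together with $(2)\Rightarrow(1)$, so that all four statements become equivalent. The equivalence $(1)\Leftrightarrow(2)$ is the classical excluded-minor characterization of outerplanar graphs (a graph is outerplanar if and only if it has no $K_4$ or $K_{2,3}$ minor), which I would simply cite from the literature rather than reprove. The implication $(2)\Rightarrow(3)$ is precisely the $P_2$ case of the Behzad--Mahmoodian planar classification recalled in the introduction, and $(3)\Rightarrow(4)$ is immediate, since any planar graph embeds in a disk and a disk embeds in $N_1$. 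Hence the only substantive step is $(4)\Rightarrow(2)$: if $G \boxempty P_2$ is projective planar, then $G$ is outerplanar.

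For that step I would exploit the prism structure of $G \boxempty P_2$. Writing $G_0 = G \times \{0\}$ and $G_1 = G \times \{1\}$ for the two $G$-fibers, the graph is the disjoint union of the two copies $G_0 \cong G_1 \cong G$ together with the perfect matching supplied by the $P_2$-fibers. Fix an embedding of $G \boxempty P_2$ in $N_1$. Since $G_0$ and $G_1$ are vertex-disjoint, and any two non-contractible simple closed curves in the projective plane must intersect, at most one of $G_0, G_1$ can contain a non-contractible cycle. Thus at least one copy, say $G_0$, has all of its cycles contractible; as $G_0$ is connected, it therefore lies inside a closed disk $D \subset N_1$ and is embedded there as a plane graph.

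Now I would run the standard planar-prism face argument, but inside $N_1$. Because $G_1$ is connected and disjoint from $G_0$, it lies in a single face $f$ of the embedded graph $G_0$. Each matching edge joins a vertex $v_0 \in G_0$ to its partner in $G_1 \subseteq f$ along an arc whose interior meets neither the vertices nor the edges of $G_0$; this interior is connected and therefore confined to the single face $f$ that contains its other endpoint, which forces $v_0 \in \partial f$. Since every vertex of $G_0$ is the endpoint of exactly one matching edge, every vertex of $G_0$ lies on the boundary of the one face $f$. Combined with the plane embedding of $G_0$ inside $D$ (and the freedom to take $f$ as the outer face), this exhibits $G \cong G_0$ as outerplanar, closing the cycle.

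The main obstacle is the $(4)\Rightarrow(2)$ step, and within it the two surface-topology facts that let the reduction to the planar argument go through: that disjoint non-contractible simple closed curves cannot coexist in $N_1$ (which follows from the nonzero class of $H_1(N_1;\ZZ_2)$ having nonzero self-intersection), and that a connected subgraph all of whose cycles are contractible is confined to a disk. An alternative, more combinatorial route to $(4)\Rightarrow(1)$ would argue by contrapositive using minor-monotonicity of the operation $-\boxempty P_2$: if $G$ has a $K_4$ or $K_{2,3}$ minor, then $G \boxempty P_2$ has $K_4 \boxempty P_2$ or $K_{2,3} \boxempty P_2$ as a minor, after which one checks that these two base graphs are non-projective-planar by exhibiting members of $\Forb(N_1)$. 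I expect the topological face argument above to be cleaner, since it proves $(4)\Rightarrow(2)$ uniformly and avoids that base-case analysis.
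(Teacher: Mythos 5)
Your proposal is correct, but it takes a genuinely different route from the paper at the only substantive step. The paper proves $(4)\Rightarrow(1)$ by contrapositive with explicit forbidden minors: if $G$ has a $K_4$ or $K_{2,3}$ minor, then $G \boxempty P_2$ has a $K_4 \boxempty P_2$ or $K_{2,3} \boxempty P_2$ minor, and these are nonprojective planar because $K_4 \boxempty P_2 \cong \mathcal{D}_{17} \in \Forb(N_1)$ while $K_{2,3} \boxempty P_2$ contains $\mathcal{G}_1 \in \Forb(N_1)$ as a subgraph. You instead prove $(4)\Rightarrow(2)$ directly by surface topology: since the two $G$-fibers are disjoint and disjoint noncontractible simple closed curves cannot coexist in $N_1$, one fiber has only contractible cycles, hence lies in a closed disk; the matching edges then force every vertex of that fiber onto the boundary of the single face containing the other fiber, and the re-embedding trick (any face of a plane graph can be made the outer face) yields outerplanarity. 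What your route buys: it is independent of Archdeacon's completeness theorem for $\Forb(N_1)$ and needs no base-case minor computations. What the paper's route buys: it is short, purely combinatorial, and it generates exactly the data the paper reuses later --- $\mathcal{D}_{17}$ and $\mathcal{G}_1$ feed into Corollary \ref{cor:MMNPP}, and $K_4 \boxempty P_2$, $K_{2,3} \boxempty P_2$ feed into Corollary \ref{cor:MMNPP2}; under your approach those corollaries would still require the paper's minor computations as separate work.

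One caveat on rigor: the lemma you lean on --- a connected graph embedded in $N_1$ all of whose cycles are contractible lies in a closed disk --- is true but not obvious, and you assert rather than prove it. It deserves a citation or a proof (for instance: a regular neighborhood $N$ of the embedded fiber has trivial image in $\pi_1(N_1)$, hence lifts to the double cover $S^2$ and is planar; its boundary circles bound disks in $N_1$, and if none of those disks contained $N$, capping them off would exhibit $S^2$ embedded in $N_1$, a contradiction). This lemma is genuinely load-bearing, not a formality: $K_4$ admits an embedding in $N_1$ (the antipodal quotient of the cube graph) in which all four vertices lie on a single face, so the conclusion of your face argument alone does not imply outerplanarity; it is the contractibility of all cycles, i.e.\ the confinement to a disk, that rules out such embeddings.
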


\begin{proof}
The equivalence of (1) and (2) follows from the work of Chartrand and Harary in \cite{ChHa1967}. The equivalence of (2) and (3) is given in Proposition 4.3 of \cite{ImKlRa2008} and follows from the classification of planar Cartesian products, which was originally proved in \cite{BeMa1969}. 

It suffices to show that (4)  implies one of (1)-(3) and vice versa to complete the proof. First, suppose $G$  does contain a $K_{4}$ or $K_{2,3}$ minor. Note that, $K_4 \boxempty P_2$ is isomorphic to  $\mathcal D_{17} \in \Forb(N_1)$ and $K_{2,3} \boxempty P_2$ contains a subgraph isomorphic to  $\mathcal{G}_1 \in \Forb(N_1)$. Thus, $G \boxempty P_2$ is nonprojective planar, showing (4) implies (1). Now if we suppose $G \boxempty P_2$ is planar, then clearly $G \boxempty P_2$ is also projective planar, showing (3) implies (4), and completing the proof. 
\end{proof}

\section{Classification Part II: one factor has three vertices}
\label{sec:ClassificationIII}

We now consider the cases where $G \boxempty H$ is projective planar with $|V(H)|=3$. Since $H$ is simple and connected, this implies that $H$ is either $P_3$ or $C_3$. We first set some notation and prove some lemmas that will be used in both of these cases. The graphs in Figure \ref{fig:Relevant graphs P3,C3 case} will frequently be referenced in this section. Along the top row, there are four infinite classes of graphs, $X_m,B_m,W_m,$ and $Q_m$ with $m\in\mathbb N$. At each of the dotted edges, the index $m$ denotes the number of edges between the endpoints. 
\begin{figure}
    \centering
    \begin{overpic}[width=0.8\linewidth]{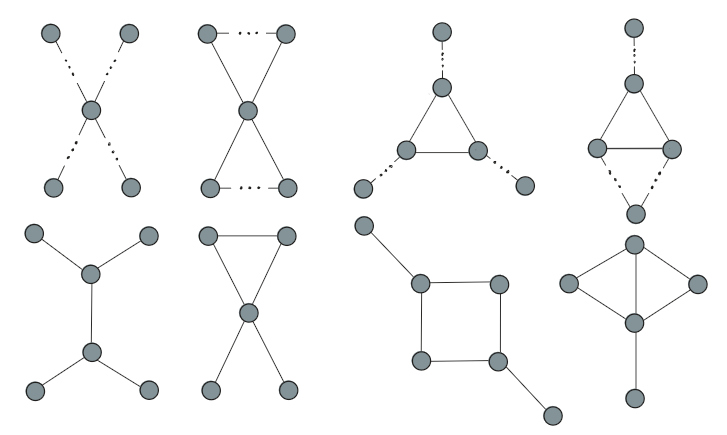}
    \put(1,55){$X_m$}
    \put(22.5,55){$B_m$}
    \put(54.5,55){$W_m$}
    \put(81,55){$Q_m$}
    \put(1.5,26.5){$I$}
    \put(82.5,25.5){$L$}
    \put(25,26.5){$R$}
    \put(46,27.5){$S$}
    \end{overpic}
    \caption{Relevant graphs for constructions in Section \ref{sec:ClassificationIII}. The subscript $m$ for $X_m$, $B_m$, $W_m$, and $Q_m$ indicates a $P_{m+1}$ subgraph in the dotted lines.}
    \label{fig:Relevant graphs P3,C3 case}
   
\end{figure}


\begin{lemma}
    \label{lem:I box P3 nonembeding in N1}
    The Cartesian product $I \boxempty P_3$ does not embed on $N_1$. 
\end{lemma}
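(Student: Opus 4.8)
The plan is to prove nonembeddability by exhibiting a member of $\Forb(N_1)$ as a minor (or subgraph) of $I \boxempty P_3$, mirroring the strategy used in the proof of Theorem~\ref{thm:N1 embedding P2 Case}. Since a graph embeds in $N_1$ if and only if it contains no minor in $\Forb(N_1)$, producing a single forbidden minor suffices to establish the lemma.

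First I would fix coordinates on the factors. Writing $V(P_3) = \{1,2,3\}$ with edges $12$ and $23$, the product $I \boxempty P_3$ decomposes into three $I$-fibers $I \times \{i\}$ for $i = 1,2,3$, joined by the $P_3$-fibers $\{v\} \times P_3$ over the vertices $v \in V(I)$. This fiber picture makes the global structure transparent: the middle fiber $I \times \{2\}$ is adjacent to both outer fibers, and each vertex of $I$ contributes a three-vertex path threading the three copies. Given the parallel with Theorem~\ref{thm:N1 embedding P2 Case}, where $K_4 \boxempty P_2$ is isomorphic to $\mathcal{D}_{17}$ and $K_{2,3} \boxempty P_2$ contains $\mathcal{G}_1$ as a subgraph, I expect $I \boxempty P_3$ either to be isomorphic to one of the six relevant forbidden minors or to contain one as a subgraph.

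Next I would locate the appropriate forbidden minor. Among the six graphs of $\Forb(N_1)$ used in this classification (see Appendix~\ref{sec:Appendix}), I would identify the one whose branch-vertex and connectivity pattern is compatible with the three-layer fiber structure above, and then realize it by a concrete minor sequence: contract selected edges within the $P_3$-fibers to merge layers where needed, and delete the edges of $I \boxempty P_3$ that are extraneous to the target graph. The resulting graph should then be checked to be isomorphic to, or to contain a subgraph isomorphic to, the chosen forbidden minor.

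The main obstacle is this middle step, namely the explicit matching. The forbidden minors for the projective plane are moderately large and not highly symmetric, so the difficulty lies not in the minor operations themselves (contractions and deletions along fibers are routine once the fiber labels are fixed) but in correctly recognizing which forbidden graph $I \boxempty P_3$ reduces to and in verifying the isomorphism afterward through careful vertex-by-vertex bookkeeping. Once the forbidden minor is exhibited, the conclusion that $I \boxempty P_3$ does not embed on $N_1$ follows immediately from the characterization of $\Forb(N_1)$.
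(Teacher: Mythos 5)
You have correctly identified the strategy---exhibit an element of $\Forb(N_1)$ as a minor of $I \boxempty P_3$ by deleting and contracting edges along the fiber structure---and this is precisely the approach the paper takes. The problem is that your proposal stops exactly where the proof has to begin. Identifying \emph{which} forbidden graph arises, and writing down the deletions and contractions that produce it, is not a routine verification that can be deferred; it is the entire mathematical content of the lemma, as you yourself concede when you call it ``the main obstacle.'' As written, no forbidden minor is ever named, no minor sequence is ever given, and no isomorphism is ever checked, so the nonembeddability of $I \boxempty P_3$ is never actually established. This is a genuine gap, not a presentational one.

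Here is what the missing step looks like (it is the paper's proof). The target is $\mathcal{G}_1$, the graph consisting of two disjoint copies of $K_{2,3}$ joined by three edges matching the degree-two vertices of one copy to those of the other. Write $a,b$ for the two degree-three vertices of $I$, with leaves $\ell_1,\ell_2$ adjacent to $a$ and $\ell_3,\ell_4$ adjacent to $b$. In $I \boxempty P_3$, delete the four $P_3$-fiber edges incident to the middle-layer vertices $(a,2)$ and $(b,2)$, then contract each pendant fiber $\{\ell_i\}\times P_3$ to a single vertex $\ell_i^{*}$. Now $\ell_1^{*}$ and $\ell_2^{*}$ are each adjacent to exactly $(a,1),(a,2),(a,3)$, giving one $K_{2,3}$; similarly $\ell_3^{*}$ and $\ell_4^{*}$ together with $(b,1),(b,2),(b,3)$ give a second, disjoint $K_{2,3}$; and the three surviving $I$-fiber edges $[(a,k),(b,k)]$, $k=1,2,3$, join the two copies exactly as $\mathcal{G}_1$ requires. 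One further inaccuracy in your setup: your expectation that, as in Theorem \ref{thm:N1 embedding P2 Case}, the product is isomorphic to or contains a forbidden graph as a \emph{subgraph} is not borne out. The only vertices of $I \boxempty P_3$ with degree at least three are the ten vertices $(a,k),(b,k),(\ell_i,2)$, and each $(\ell_i,2)$ has only one neighbor among these ten, so no subgraph on them can be $3$-regular as $\mathcal{G}_1$ is; the contractions of the pendant fibers are genuinely necessary, which is why ``minor'' is the operative notion in this lemma.
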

\begin{proof}

    We claim that $I \boxempty P_3$ has a $\mathcal{G}_1$ forbidden minor for the projective plane. We obtain a $\mathcal{G}_{1}$ minor from $I \boxempty P_3$ by first deleting the four $P_3$ fiber edges incident to the two vertices of degree four in $I \boxempty P_3$. Then we edge contract all of the remaining edges in the $P_3$ fibers. This process yields a pair of disjoint $K_{2,3}$ graphs connected by three edges in the desired manner to form $\mathcal{G}_{1}$. Thus $I \boxempty P_3$ does not embed on $N_1$.
\end{proof}

\begin{lemma}
    \label{lem:K1,5 box P3 nonembedding in N1}
    The Cartesian product $K_{1,5} \boxempty P_3$ does not embed on $N_1$.
\end{lemma}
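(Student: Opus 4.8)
The plan is to mirror the strategy of Lemma~\ref{lem:I box P3 nonembeding in N1} and exhibit a forbidden minor for the projective plane inside $K_{1,5} \boxempty P_3$; the natural candidate here is $K_{3,5}$. Write $c$ for the center of $K_{1,5}$, write $\ell_1,\dots,\ell_5$ for its leaves, and label the vertices of $P_3$ by $1,2,3$ with $2$ the central vertex. Then $K_{1,5}\boxempty P_3$ consists of the center $P_3$-fiber $(c,1)-(c,2)-(c,3)$ together with the five leaf $P_3$-fibers $(\ell_i,1)-(\ell_i,2)-(\ell_i,3)$, while in each $K_{1,5}$-fiber the vertex $(c,j)$ is joined to every $(\ell_i,j)$.

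First I would contract each of the five leaf $P_3$-fibers down to a single vertex $\ell_i^{*}$. Since $(\ell_i,j)$ is adjacent to $(c,j)$ for each $j\in\{1,2,3\}$, the contracted vertex $\ell_i^{*}$ becomes adjacent to all three center vertices $(c,1),(c,2),(c,3)$; and because distinct leaves are nonadjacent in $K_{1,5}$, no multi-edges are created. The resulting simple minor therefore has the three center vertices on one side, the five vertices $\ell_1^{*},\dots,\ell_5^{*}$ on the other, all fifteen edges between the two sides, plus the two center-fiber edges $(c,1)(c,2)$ and $(c,2)(c,3)$. Deleting these last two edges leaves exactly $K_{3,5}$, so $K_{3,5}$ is a minor of $K_{1,5}\boxempty P_3$.

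To finish, I would observe that $K_{3,5}$ does not embed on $N_1$: it is bipartite, hence has girth at least $4$, so any embedding on a surface of Euler characteristic $1$ would force $15=|E(K_{3,5})|\le 2\bigl(|V(K_{3,5})|-1\bigr)=2(8-1)=14$, a contradiction. In fact $K_{3,5}$ is one of the $35$ elements of $\Forb(N_1)$ (see Appendix~\ref{sec:Appendix}). Since embeddability on $N_1$ is a minor-closed property, the presence of this minor shows that $K_{1,5}\boxempty P_3$ does not embed on $N_1$.

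There is little genuine obstacle beyond bookkeeping here. The one point that must be checked carefully is that the leaf-fiber contractions produce a \emph{simple} complete bipartite graph rather than one with multiple edges or with edges missing between the two sides; this is precisely where the star structure of $K_{1,5}$ is used, namely that the leaves are pairwise nonadjacent while each leaf is adjacent to $c$ in every fiber.
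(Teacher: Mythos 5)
Your proof is correct and follows essentially the same route as the paper: both exhibit $K_{3,5} = \mathcal{E}_{3} \in \Forb(N_1)$ as a minor of $K_{1,5} \boxempty P_3$ by contracting the five leaf $P_3$-fibers and deleting the two $P_3$-fiber edges at the degree-seven vertex (the paper merely performs these commuting operations in the opposite order). Your added Euler-formula verification that $K_{3,5}$ is nonprojective planar ($15 > 2(8-1)$ for a bipartite graph on a surface of Euler characteristic $1$) is a nice self-contained bonus, but not a different argument; the paper simply cites $\mathcal{E}_{3} \in \Forb(N_1)$.
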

\begin{proof}

    We claim that $K_{1,5} \boxempty P_3$ has a $K_{3,5} = \mathcal{E}_{3}$ forbidden minor for the projective plane. We obtain a $K_{3,5}$ minor from $K_{1,5} \boxempty P_3$ by first deleting the two $P_3$ fiber edges incident to the unique vertex of degree seven in $K_{1,5} \boxempty P_3$. Then, we edge contract along all remaining edges from the $P_3$ fibers, yielding a $K_{3,5}$ minor. Thus, $K_{1,5} \boxempty P_3$ does not embed on $N_1$.
\end{proof}

Note that, Lemma \ref{lem:K1,5 box P3 nonembedding in N1} implies that if $G \boxempty P_3$ embeds on $N_1$ and $|V(H)|=3$, then for all $v\in V(G)$, $\deg(v)\le 4$. 

\begin{lemma}
    \label{lem:S box P3 nonembeding in N1}
   The Cartesian product $S \boxempty P_3$ does not embed on $N_1$. 
\end{lemma}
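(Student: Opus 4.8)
The plan is to mirror the strategy of Lemmas \ref{lem:I box P3 nonembeding in N1} and \ref{lem:K1,5 box P3 nonembedding in N1}: exhibit some member of $\Forb(N_1)$ as a minor of $S \boxempty P_3$, built by manipulating the fiber decomposition. Since any minor of a graph embeddable on $N_1$ is itself embeddable on $N_1$, producing a single forbidden minor immediately forces nonembeddability, so the entire argument reduces to one explicit construction.

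First I would fix the fiber structure $S \boxempty P_3 = (S \times V(P_3)) \cup (V(S) \times P_3)$, labeling the three $S$-fibers $S \times \{1\}$, $S \times \{2\}$, $S \times \{3\}$ with $2$ the middle vertex of $P_3$, so that the $P_3$-fiber over each $v \in V(S)$ is the path $(v,1)-(v,2)-(v,3)$ and the middle copy of $S$ is joined to each outer copy by a perfect matching of $P_3$-fiber edges. The central step is then to decide, guided by the degrees and adjacencies of $S$, which $P_3$-fiber edges to delete and which to contract. Following the earlier lemmas, I expect to delete the $P_3$-fiber edges incident to the highest-degree vertices of $S$, so that their three copies are not identified, and to contract every remaining $P_3$-fiber edge, thereby merging the three copies of each low-degree vertex into one. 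By the remark following Lemma \ref{lem:K1,5 box P3 nonembedding in N1}, the graph $S$ has maximum degree at most $4$, so the resulting minor should be one of the low-degree obstructions, most plausibly $\mathcal{G}_1$ or $K_{3,5} = \mathcal{E}_3$; pinning down the correct target and verifying that the prescribed deletions and contractions realize it is the heart of the proof.

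The hard part will be the bookkeeping after deletion and contraction: I must confirm that the surviving adjacencies match the target forbidden graph \emph{exactly}, with the correct bipartition sizes and with no missing or spurious edges, rather than merely producing a subgraph or supergraph of it. Concretely, I would track how many distinct vertices each $P_3$-fiber contributes after the chosen edges are deleted or contracted, then read off the induced edge set between the resulting merged and unmerged classes and compare it against the adjacency pattern of the intended member of $\Forb(N_1)$. Once that identification is checked, the conclusion that $S \boxempty P_3$ does not embed on $N_1$ is immediate.
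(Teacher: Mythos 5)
Your high-level strategy is indeed the paper's: exhibit an element of $\Forb(N_1)$ as a minor of $S \boxempty P_3$ by deleting and contracting $P_3$-fiber edges. But the proposal has a genuine gap, and the concrete recipe you give cannot be completed as stated. Carry your operations out explicitly: deleting all four $P_3$-fiber edges at the two degree-three vertices $1,3$ of $S$ and contracting every remaining $P_3$-fiber edge produces a bipartite graph $G^*$ on ten vertices with eighteen edges, in which the contracted fibers $w_2,w_4$ over the degree-two vertices are adjacent to all six surviving vertices $(1,j),(3,j)$, while the contracted pendant fibers $w_5,w_6$ are adjacent to the three $(1,j)$'s, respectively the three $(3,j)$'s. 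This $G^*$ is neither of your candidate targets, and in fact contains \emph{neither} as a minor: $\mathcal{G}_1$ is $3$-regular with ten vertices and fifteen edges, so it could only arise from $G^*$ by exactly three edge deletions with no contractions; but $w_2$ and $w_4$ each need their degree lowered from six to three, and since they lie on the same side of the bipartition no edge joins them, so three deletions can lower their combined degree by at most three rather than the required six. Likewise $\mathcal{E}_3 = K_{3,5}$ is $3$-connected with eight vertices, whereas $G^*$ has the $2$-cut $\{w_2,w_4\}$, and each side of that cut together with the cut vertices spans only six vertices (a $K_{3,3}$), too small to carry a $K_{3,5}$ minor. So the verification you defer as ``the heart of the proof'' is not a bookkeeping check that succeeds; for your stated targets it provably fails.

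The paper avoids this trap with an asymmetric construction aimed at a different target, $\mathcal{E}_{22}$: it deletes only the two fiber edges $[(1,2),(1,3)]$ and $[(3,2),(3,3)]$ (keeping the fiber edges between levels $1$ and $2$ at vertices $1$ and $3$), contracts the pendant fibers over $5$ and $6$, collapses the entire third $S$-fiber together with its two pendant attachments into a single apex vertex, and then deletes two more edges. Your symmetric recipe can never reach a $3$-connected target such as $\mathcal{E}_{22}$ or $\mathcal{E}_3$ precisely because it manufactures the $2$-cut $\{w_2,w_4\}$. For what it is worth, your $G^*$ \emph{is} nonprojective planar---it consists of two $K_{3,3}$'s sharing only the two vertices $w_2,w_4$, and any embedding of the first $K_{3,3}$ in $N_1$ is cellular with all faces bounded by cycles, so the connected remainder of the second $K_{3,3}$ would have to embed in the closed disk bounded by a single face, forcing a planar embedding of $K_{3,3}$. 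So your construction could be salvaged, but only by adding such an embedding argument (or by locating the correct low-connectivity element of $\Forb(N_1)$ inside $G^*$), neither of which appears in the proposal.
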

\begin{proof}
     We claim that $S \boxempty P_3$ contains an $\mathcal{E}_{22}$ forbidden minor for the projective plane. We assign vertex labels with a coordinate system where the first coordinate represents an $S$ fiber and the second coordinate denotes a $P_3$ fiber. We label the vertices contained in the 4-cycle of an $S$ fiber $1$ through $4$ in a clockwise manner and label the vertex 5 which connects to vertex 1 and the vertex 6 which connects to vertex 3. We label the vertices on a $P_3$ fiber $1$ through $3$ in the usual way. We will provide labels for new vertices resulting from edge contraction while all other vertices retain their labels from the previous step. We construct this minor by first deleting the edges $[(1,2),(1,3)]$ and $[(3,2),(3,3)]$. Next we edge contract along the $P_3$ fiber between (5,1), (5,2), and (5,3), forming a new vertex $v_5$. Likewise we edge contract along the $P_3$ fiber between (6,1), (6,2), and (6,3), forming one new vertex, $v_6$. Next we edge contract along the four remaining edges in the third $S$ fiber and the two edges connecting this $S$ fiber to $v_5$ and $v_6$, respectively. This sequence of edge contractions forms a single $v_7$. Finally, we delete the two edges $[(1,2), v_7]$ and $[(3,2), v_7]$. Then we have an $\mathcal{E}_{22}$ forbidden minor, as needed. 
\end{proof}

\begin{lemma}
    \label{lem:G is a tree P3,C3 case}
    Suppose $G \boxempty H$ embeds in $N_1$ and $G$ is a tree. If $|V(H)|=3$, then $G$ is a minor of $X_{m}$.
\end{lemma}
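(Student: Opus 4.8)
The plan is to reduce immediately to the case $H = P_3$ and then convert the two relevant non-embedding results into excluded-minor constraints on $G$. Since $|V(H)|=3$ forces $H\in\{P_3,C_3\}$ and $P_3$ is a subgraph of $C_3$, the product $G\boxempty P_3$ is a subgraph of $G\boxempty C_3$; as embeddability on $N_1$ is monotone under passing to subgraphs, the hypothesis that $G\boxempty H$ embeds implies that $G\boxempty P_3$ embeds. I would next record the routine fact that the Cartesian product is monotone under the minor relation in each factor: if $A$ is a minor of $B$, then $A\boxempty C$ is a minor of $B\boxempty C$, since each vertex deletion, edge deletion, and edge contraction performed on $B$ lifts to the same operation performed simultaneously along the corresponding fibers of $B\boxempty C$. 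Combining this with Lemmas \ref{lem:K1,5 box P3 nonembedding in N1} and \ref{lem:I box P3 nonembeding in N1} gives the constraints I want: if $G$ contained a $K_{1,5}$ minor then $G\boxempty P_3$ would contain the non-projective-planar $K_{1,5}\boxempty P_3$ as a minor, and likewise an $I$ minor in $G$ would force an $I\boxempty P_3$ minor in $G\boxempty P_3$. Hence $G$ has neither a $K_{1,5}$ minor nor an $I$ minor, and in particular every vertex of $G$ has degree at most $4$, as already noted after Lemma \ref{lem:K1,5 box P3 nonembedding in N1}.

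With these two exclusions in hand, the heart of the argument is a purely structural claim about trees: a tree with no $K_{1,5}$ minor and no $I$ minor is a minor of $X_m$ for some $m\in\mathbb N$. I would prove this by analyzing the branch vertices of $G$, that is, those of degree $3$ or $4$, which are the only high-degree possibilities once the maximum degree is at most $4$. The excluded $I$ minor is what restricts how many branch vertices $G$ may have and how they may be spread out along $G$: the graph $I$ encodes a pair of separated branch configurations, precisely the two claw-type structures whose product with $P_3$ produces the two linked copies of $K_{2,3}$ forming $\mathcal{G}_1$ in the proof of Lemma \ref{lem:I box P3 nonembeding in N1}. A tree avoiding $I$ therefore cannot contain two such configurations in the forbidden relative position, and I would convert this into a bound on the decoration of $G$ away from a single longest path, showing that outside one longest path $G$ carries only the limited pendant structure already present in $X_m$ (whose dotted spine is exactly the $P_{m+1}$ of Figure \ref{fig:Relevant graphs P3,C3 case}).

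Finally, to conclude that $G$ is genuinely a minor of $X_m$, I would take $m$ to be at least the length of a longest path of $G$ and exhibit an explicit minor map: route a longest path of $G$ onto the dotted $P_{m+1}$ spine of $X_m$, place the branch vertices and their attached subtrees onto the corresponding fixed part of $X_m$, and realize any discrepancy by deleting the unused edges and vertices of $X_m$ and contracting the slack along the spine. I expect the main obstacle to be the structural claim of the second paragraph, namely verifying that excluding $K_{1,5}$ and $I$ captures exactly the trees that embed as minors of $X_m$; this demands a careful and complete case analysis of the number, degrees, and mutual positions of the branch vertices of $G$, checked against the precise shapes of $X_m$ and $I$ in Figure \ref{fig:Relevant graphs P3,C3 case}, rather than any single slick observation. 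The preliminary reductions and the minor-monotonicity of $\boxempty$ are then the easy scaffolding around this combinatorial core.
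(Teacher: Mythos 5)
Your opening reductions coincide with the paper's and are correct: since $P_3$ is a subgraph of $C_3$, projective planarity of $G \boxempty H$ with $|V(H)|=3$ gives projective planarity of $G \boxempty P_3$, and minor-monotonicity of $\boxempty$ in each factor (your justification of which is valid, and which the paper uses implicitly) combined with Lemmas \ref{lem:K1,5 box P3 nonembedding in N1} and \ref{lem:I box P3 nonembeding in N1} forces $G$ to have maximum degree at most $4$ and no $I$ minor. The genuine gap is that you never prove, or even precisely state, the structural fact that finishes the lemma, and you mispredict its nature: you announce that it "demands a careful and complete case analysis\dots rather than any single slick observation," when the paper's entire remaining argument \emph{is} a single slick observation. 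Namely: a tree with no $I$ minor has at most one vertex of degree $\geq 3$. Proof: if $u \neq v$ both have degree $\geq 3$ in the tree $G$, contract the unique $u$--$v$ path down to a single edge; each of $u$ and $v$ keeps at least two incident edges off that path, and deleting everything else leaves exactly $I$ (two claws sharing an edge). Combined with maximum degree at most $4$, $G$ is then either a path or a single vertex with at most four disjoint paths attached, and such a tree is visibly a subgraph of $X_m$ for $m$ large. No longest path, no routing, no contraction of slack is needed.

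The imprecision in your sketch is not merely cosmetic. You write that a tree avoiding $I$ "cannot contain two such configurations in the \emph{forbidden relative position}," which leaves open the possibility that two branch vertices might coexist in some allowed relative position. In a tree there is no such position: any two vertices of degree $\geq 3$ yield the $I$ minor by the contraction above. This is essential to the truth of the lemma, not just a convenience, because a tree with two branch vertices is not a minor of any $X_m$ at all (every connected minor of $X_m$ has at most one vertex of degree $\geq 3$). So if your deferred case analysis concluded anything weaker than "at most one branch vertex," your final step --- placing branch vertices (plural) and their subtrees onto a "fixed part" of $X_m$ --- would fail, since $X_m$ has no room for a second branch vertex. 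As written, the proposal is correct scaffolding around the key step, but the key step itself is missing.
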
 
\begin{proof}
    Suppose $G\boxempty H$ embeds in $N_1$ and $G$ is a tree. By Lemma \ref{lem:K1,5 box P3 nonembedding in N1},  we have $\deg(v)\le 4$, for all  $v \in V(G)$. In addition,  Lemma \ref{lem:I box P3 nonembeding in N1} shows that $G$ cannot contain an  $I$ minor, and so, there exists at most one vertex $v_0 \in V(G)$ with $\deg(v_0)=3$ or $\deg(v_0)=4$. Suppose for all $v \in V(G)$, we have $\deg(v)\le 2$. Then since $G$ is a tree, $G$ must be a path, which is clearly a subgraph of $X_m$ for an appropriate choice of $m \in \mathbb{N}$.  Otherwise, $G$ is a tree where there exists  a unique $v_{0} \in V(G)$ with $\deg(v_0)=3$ or $\deg(v_0)=4$, while for all $v \in V(G) - \{v_0\}$, we have $\deg(v) \leq 2$. In either case,  $G$ is a subgraph of $X_m$ for sufficiently large $m \in \mathbb{N}$, completing the proof.    
\end{proof}

\begin{lemma}
    \label{lem:G is pseudotree - cycles P3,C3 case}
    Suppose $G \boxempty P_3$ embeds in $N_1$ and $G$ contains at least two distinct cycles, $C$ and $C'$. Then $C \cap C'\neq \emptyset$.
\end{lemma}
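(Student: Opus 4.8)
The plan is to prove the contrapositive: if $G$ contains two \emph{vertex-disjoint} cycles, then $G \boxempty P_3$ does not embed in $N_1$. So suppose $C \cap C' = \emptyset$. Since $G$ is connected, there is a path $P$ in $G$ joining a vertex of $C$ to a vertex of $C'$ whose interior avoids $C \cup C'$, and hence $G$ contains the ``barbell'' subgraph $B = C \cup P \cup C'$. The structural fact I would lean on is that the Cartesian product is monotone under minors: if $B$ is a minor of $G$, then $B \boxempty P_3$ is a minor of $G \boxempty P_3$, because deleting a vertex $u$, deleting an edge $uu'$, or contracting an edge $uu'$ of $G$ corresponds respectively to deleting the fiber $\{u\} \times V(P_3)$, deleting the three fiber edges lying over $uu'$, or contracting those three edges in $G \boxempty P_3$. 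Since embeddability in $N_1$ is minor-closed, it then suffices to exhibit a member of $\Forb(N_1)$ as a minor of $B \boxempty P_3$; moreover, after contracting $C$ and $C'$ down to triangles I may assume $C = C' = C_3$.

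Next I would reproduce, on each cycle separately, the $K_{2,3}$-extraction already used in Lemma \ref{lem:I box P3 nonembeding in N1}. Labeling the three $P_3$-levels $1,2,3$, I contract the level-$1$ copy $C \times \{1\}$ to a single vertex $x$ and the level-$3$ copy $C \times \{3\}$ to a single vertex $y$. Every vertex of the level-$2$ copy $C \times \{2\}$ is adjacent to its level-$1$ and level-$3$ neighbors, so both $x$ and $y$ become adjacent to all three vertices of $C \times \{2\}$, producing a $K_{2,3}$. Performing the same contractions on $C'$ yields a second $K_{2,3}$, and the two are vertex-disjoint precisely because $C$ and $C'$ are, so their fibers share no vertex. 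Finally, the three fibers of $P$ over the levels $1,2,3$ each give a path joining the two blocks; contracting the interior of each of these three paths leaves exactly three edges running between the two $K_{2,3}$'s. This is the graph $\mathcal{G}_{1} \in \Forb(N_1)$, assembled in the same manner as in Lemma \ref{lem:I box P3 nonembeding in N1}.

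Putting these together, $\mathcal{G}_{1}$ is a minor of $B \boxempty P_3$ and hence of $G \boxempty P_3$, so $G \boxempty P_3$ does not embed in $N_1$, which is the contrapositive of the statement. The step I expect to require the most care is the last one: verifying that the three surviving edges attach to the two $K_{2,3}$ blocks in exactly the pattern that produces $\mathcal{G}_{1}$ (the levels $1$ and $3$ edges joining the two degree-three sides $x,x'$ and $y,y'$, and the level $2$ edge joining the degree-two sides), rather than some larger nonprojective-planar graph. This is the same bookkeeping already carried out in Lemma \ref{lem:I box P3 nonembeding in N1}, and an equivalent and perhaps cleaner route is to observe directly that the barbell $B$ has $I$ as a minor and then invoke Lemma \ref{lem:I box P3 nonembeding in N1} together with the minor-monotonicity of $\boxempty$ recorded above.
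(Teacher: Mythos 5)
Your setup is sound: reducing to the barbell $B = C \cup P \cup C'$ and making explicit the minor-monotonicity of $\boxempty$ (vertex deletion, edge deletion, and edge contraction in $G$ correspond to fiberwise operations in $G \boxempty P_3$) is correct, and it usefully spells out a fact the paper uses silently. But the central construction --- the direct extraction of $\mathcal{G}_1$ from $B \boxempty P_3$ --- fails, and it fails exactly at the step you flagged as delicate. Contracting $C \times \{1\}$ to $x$ and $C \times \{3\}$ to $y$ makes $x$ and $y$ the \emph{degree-three} side of the resulting $K_{2,3}$, so the level-$1$ and level-$3$ copies of $P$ become edges $xx'$ and $yy'$ joining degree-three vertices of the two blocks, and only the level-$2$ copy joins degree-two vertices. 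That graph is not $\mathcal{G}_1$: as the paper records in the proof of Lemma \ref{lem: R box C3 nonembedding in N1}, $\mathcal{G}_1 = (K_{2,3} \boxempty P_2) - 2e$ where the two \emph{deleted} edges are the ones between pairs of degree-four vertices, i.e.\ in $\mathcal{G}_1$ all three cross edges join degree-two vertices and $\mathcal{G}_1$ is $3$-regular, whereas your graph has four vertices of degree four. Worse, this is not a case of landing on ``some larger nonprojective-planar graph'': the graph you actually produce (after also deleting the triangle edges inside $C \times \{2\}$ and $C' \times \{2\}$, which is needed to have $K_{2,3}$'s at all) is \emph{planar}. Every face of a planar $K_{2,3}$ is a quadrilateral containing both degree-three vertices, so each block can be embedded with $x$, $y$, and the attachment vertex of $P$ on its outer face, and the three cross edges then run between the blocks as disjoint arcs. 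A planar graph lies in no $\Forb(N_1)$ list, so the main line of your argument certifies nothing. Contrast this with Lemma \ref{lem:I box P3 nonembeding in N1}, where the contractions run along the $P_3$ fibers over the pendant vertices of $I$, the fibers over the two branch vertices are kept intact as the degree-two sides, and consequently all three surviving cross edges do join degree-two vertices.

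Your closing sentence is the repair, but it is not an ``equivalent and cleaner route'' --- it is the only correct one of the two, and it is precisely the paper's proof: contract $P$ to a single edge, observe that the resulting barbell contains $I$ as a minor (the two attachment vertices, each with two further cycle-neighbors, are the two degree-three vertices of $I$), and then apply Lemma \ref{lem:I box P3 nonembeding in N1} together with minor-monotonicity to conclude that $G \boxempty P_3$ has the nonprojective-planar minor $I \boxempty P_3$, a contradiction. If you prefer to keep a direct extraction from $B \boxempty P_3$, contract in the other direction: delete the $P_3$-fiber edges over the two attachment vertices, contract the $P_3$ fibers over the four remaining cycle vertices, and delete the leftover triangle edges; this reproduces the construction of Lemma \ref{lem:I box P3 nonembeding in N1} and genuinely yields $\mathcal{G}_1$.
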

\begin{proof}
    Suppose $G \boxempty P_3$ embeds in $N_1$ and $G$ contains at least two distinct cycles, say $C$ and $C'$. Then since $G$ is connected, there exists a path in $G$ connecting a vertex $v_0 \in V(C)$ to a vertex $v_1 \in V(C')$. Without lose of generality, assume this is the shortest path in $G$ between these two cycles. As a result, this path only intersects $C$ in $\{v_0\}$ and it only intersects $C'$ in $\{v_1\}$. Edge contract along this path until $v_0$ and $v_1$ are connected by a single edge. Then $G$ has an $I$ minor. By Lemma \ref{lem:I box P3 nonembeding in N1}, G does not embed on $N_1$, giving a contradiction. Thus, $C \cap C' \neq \emptyset$.
\end{proof}

\subsection{One factor is $P_3$}
\label{subsec:P3}
In this section, we assume $H= P_3$ and $G$ is a simple, connected graph. Here, we classify exactly which $G \boxempty P_3$ embed in $N_1$. 

\begin{lemma}
    \label{lem:Demise of Evil Lollipop}
    The Cartesian product $L \boxempty P_3$ does not embed on $N_1$.
\end{lemma}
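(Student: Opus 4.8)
The plan is to exhibit an explicit forbidden minor for $N_1$ inside $L \boxempty P_3$, in exactly the style of Lemmas \ref{lem:I box P3 nonembeding in N1}, \ref{lem:K1,5 box P3 nonembedding in N1}, and \ref{lem:S box P3 nonembeding in N1}. A counting obstruction is not available here: the simple projective-planar Euler bound $E \le 3V-3$ is not violated, since the lollipop $L$ is unicyclic (a cycle with a pendant path) and hence too sparse for $L \boxempty P_3$ to be edge-heavy. So a direct minor construction seems unavoidable, and the guiding picture is that the cyclic part $C \boxempty P_3$ is a prism-like cylinder of three stacked copies of the cycle while the pendant path $P$ contributes a ladder $P \boxempty P_3$ joined to that cylinder at the branch vertex.

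First I would fix a coordinate system as in the proof of Lemma \ref{lem:S box P3 nonembeding in N1}: the first coordinate records the $L$-fiber position and the second records the $P_3$-fiber position $1,2,3$, with the cycle of $L$ labeled cyclically and the stick labeled along its length out to the degree-three branch vertex $v_0$ where the stick meets the cycle. The key geometric idea is that collapsing the top and bottom copies of the cycle (together with the appropriate $P_3$-fiber edges) turns the two outer layers into a pair of hub vertices, each adjacent to every vertex of the surviving middle cycle, producing a $K_{2,n}$-type pattern whose degree-two side already carries rim edges; the ladder from the stick then supplies the remaining connecting edges needed for a recognized obstruction.

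Concretely, I would delete the two $P_3$-fiber edges incident to the middle-layer copy of $v_0$ (which has degree five) to bring that vertex under control, then edge-contract all remaining $P_3$-fiber edges over the cycle so that the three stacked cycles collapse into the desired hub-and-rim configuration, and finally contract along the ladder coming from the stick so that its far end is drawn adjacent to the collapsed cycle. Reading off the result, I expect to recognize either $\mathcal{G}_1$ (two disjoint $K_{2,3}$'s joined by three edges, as in Lemma \ref{lem:I box P3 nonembeding in N1}) or $\mathcal{E}_{22}$ (as in Lemma \ref{lem:S box P3 nonembeding in N1}) as the resulting minor, which would establish that $L \boxempty P_3$ does not embed on $N_1$.

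The main obstacle is the bookkeeping in this identification step: choosing precisely which $P_3$-fiber edges to delete and which contractions to perform so that the surviving graph is \emph{exactly} one of the named forbidden minors from Appendix \ref{sec:Appendix}, and verifying that no contraction inadvertently merges two vertices that must remain distinct (which would collapse the branch structure of the $K_{2,3}$ pieces). I would resolve this the same way the $S \boxempty P_3$ argument does: track the adjacencies of each surviving (possibly contracted) vertex explicitly and match them against the adjacency pattern of the target graph, adjusting the deletion/contraction choices until the two patterns coincide.
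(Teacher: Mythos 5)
There is a fatal error at the very first step: you have misidentified the graph $L$. The ``evil lollipop'' $L$ of Figure \ref{fig:Relevant graphs P3,C3 case} is \emph{not} unicyclic and is not a cycle with a pendant path. From the paper's own use of $L$ (and its labeling in the proof of this lemma), $L$ has five vertices, contains a $4$-cycle $1\text{--}2\text{--}3\text{--}4$, and has a \emph{unique vertex of degree four}, namely vertex $3$ on that $4$-cycle; since $L$ is simple with five vertices, this forces the chord $[1,3]$ and the edge $[3,5]$, so $L$ contains two triangles sharing an edge. This denser structure is exactly the point: in the classification of Theorem \ref{thm:N1 embedding P3 Case}, $L$ arises as the obstruction when $G$ has two cycles plus an extra connecting edge, i.e.\ when $G$ is strictly richer than the graphs $Q_m$. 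The graph you describe --- a cycle with a pendant path --- is precisely (a minor of) $Q_m$, and the backwards direction of Theorem \ref{thm:N1 embedding P3 Case} (Figure \ref{fig:N1 embeddings B,Q}) gives an explicit projective planar embedding of $Q_m \boxempty P_3$. So under your reading of $L$ the lemma would be \emph{false}, and your plan cannot be rescued by any bookkeeping: every minor of a projective planar graph is projective planar, so no choice of deletions and contractions on ``$(\text{cycle}+\text{pendant path}) \boxempty P_3$'' can ever produce an element of $\Forb(N_1)$. The hedge at the end of your proposal (``adjusting the deletion/contraction choices until the two patterns coincide'') cannot converge; the failure is in the premise, not the bookkeeping.

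For comparison, the paper's proof does follow the general strategy you intend (an explicit forbidden-minor extraction in the style of Lemmas \ref{lem:I box P3 nonembeding in N1}--\ref{lem:S box P3 nonembeding in N1}), but the details depend essentially on the chord $[1,3]$ and the degree-four vertex $3$: one deletes the two $P_3$-fiber edges over vertex $3$, deletes the four $L$-fiber edges $[(1,2),(2,2)]$, $[(1,2),(4,2)]$, $[(1,1),(3,1)]$, $[(1,3),(3,3)]$, and then contracts the $P_3$-fibers over vertices $2$, $4$, and $5$ each to a single vertex, producing an $\mathcal{E}_5$ minor. Note that this is also structurally different from your ``collapse the top and bottom copies of the cycle into two hubs'' picture: the paper keeps the fibers over vertices $1$ and $3$ intact (three vertices each) and instead contracts the fibers of the low-degree vertices, and the target minor is $\mathcal{E}_5$, not $\mathcal{G}_1$ or $\mathcal{E}_{22}$. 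To repair your write-up you would need to (i) use the correct graph $L$, and (ii) replace the ``expect to recognize'' step with a verified adjacency match against a specific element of $\Forb(N_1)$.
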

\begin{proof}
    We  claim $L \boxempty P_3$ contains an $\mathcal{E}_5$ forbidden minor for the projective plane. We assign vertex labels with a coordinate system where the first coordinate represents an $L$ fiber and the second coordinate denotes a $P_3$ fiber. We label the vertices contained in the 4-cycle of the $L$ fiber 1 through 4 in a clockwise manner so that label 3 applies to the unique degree four vertex in $L$. The final vertex of $L$ is labeled $5$. We label the vertices on a $P_3$ fiber 1 through 3 in the usual way. We will provide new labels for  vertices resulting from edge contraction while all other vertices retain their labels from the previous step. We construct this minor by first deleting the two $P_3$ fiber edges between vertices $(3,1), (3,2),$ and $(3,3)$. Then we delete the edges $[(1,2),(2,2)]$ and $[(1,2),(4,2)]$. Next, we delete the edges $[(1,1),(3,1)]$ and $[(1,3),(3,3)]$. Finally, we preform three sets of edge contractions. First we edge contract the edges between $(5,1), (5,2)$, and $(5,3)$ into $v_5$. Then we contract the two edges between $(2,1), (2,2),$ and $(2,3)$ into $v_2$, and lastly we contract the two edges between $(4,1)$, $(4,2)$, and $(4,3)$ into $v_4$. This results in an $\mathcal{E}_5$ forbidden minor, as needed. 
\end{proof}

\begin{lemma}
Let $G$ be a graph with two edge distinct cycles $C$ and $C'$, and suppose that $V(C\cap C')$ contains at least two distinct vertices. Then there exists three internally disjoint paths between two vertices $v_1, v_2 \in V(C \cap C')$ in $C \cup C'$.
    \label{lem:CycleIntersectionMinor}
\end{lemma}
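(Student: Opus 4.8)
The plan is to produce the three internally disjoint paths directly as a ``theta'' subgraph of $C \cup C'$ whose two branch vertices lie in $V(C \cap C')$. Write $S = V(C) \cap V(C')$, so $|S| \ge 2$ by hypothesis. The starting observation is that $C$ and $C'$, being distinct, cannot satisfy $E(C') \subseteq E(C)$: a cycle has no proper subgraph that is itself a cycle, so $E(C') \subseteq E(C)$ would force $C' = C$. Hence there is an edge $f = xy \in E(C') \setminus E(C)$, and this edge will anchor the entire construction.

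The core step is to grow a single arc of $C'$ around $f$ out to its nearest shared vertices. Traversing $C'$ from $x$ in the direction away from $y$, let $v_1$ be the first vertex of $S$ encountered (allowing $v_1 = x$); traversing $C'$ from $y$ away from $x$, let $v_2$ be the first vertex of $S$ encountered. Let $D$ be the resulting arc of $C'$ from $v_1$ to $v_2$ through $f$. The edge $f$ belongs to one of the arcs into which the vertices of $S$ subdivide $C'$, and that arc has endpoints exactly $v_1$ and $v_2$, which are distinct because $|S| \ge 2$. By construction no interior vertex of $D$ lies in $S$, and since every interior vertex of $D$ is a vertex of $C'$, no interior vertex of $D$ lies on $C$ at all. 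Thus $D$ is internally disjoint from $C$, and moreover $D$ differs from both arcs of $C$ between $v_1, v_2$ since $D$ contains the edge $f \notin E(C)$.

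To finish, I would take the two arcs $P_1, P_2$ of the cycle $C$ determined by $v_1$ and $v_2$; these are internally disjoint because $C$ is a cycle. Together with $D$ they furnish three $v_1$–$v_2$ paths that pairwise meet only in $\{v_1, v_2\}$ — the interiors of $P_1, P_2$ lie on $C$ while the interior of $D$ avoids $C$ — so the three are internally disjoint, with $v_1, v_2 \in V(C \cap C')$ as required. The main obstacle is that $C$ and $C'$ may share many vertices whose cyclic orders along the two cycles interleave arbitrarily, which is precisely why one cannot simply pick two arcs of $C$ and one arc of $C'$ between an arbitrary shared pair: such a $C'$-arc may pass through interior vertices of $C$ and thereby fail to be internally disjoint. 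Anchoring at an edge $f \notin E(C)$ and stopping the traversal at the nearest shared vertices is the device that sidesteps this difficulty, simultaneously guaranteeing $v_1 \ne v_2$ and that $D$ touches $C$ only at its endpoints.
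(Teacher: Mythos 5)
Your proof is correct and takes essentially the same route as the paper: both arguments build a theta subgraph by anchoring at an edge in one cycle but not the other, extending along that cycle to the nearest shared vertices to get a path internally disjoint from the second cycle, and then completing with the two arcs of the second cycle. Your bidirectional growth from an arbitrary edge $f \in E(C')\setminus E(C)$ is a minor (and slightly cleaner) variant, since it avoids the paper's unargued assertion that some edge of the symmetric difference is incident to a shared vertex.
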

\begin{proof}
   Let $C,C'$ be distinct cycles in $G$ and suppose that $V(C\cap C')$ contains at least two distinct vertices. As $C$ and $C'$ are distinct, there exists $e_1\in E(C)-E(C')$ that is incident to some $v_1 \in V(C \cap C')$. Since $v_1\in V(C')$, we have that $v_1$ is incident to two distinct edges in $C'$. Since $e_1\not\in E(C')$, $e_1$ is distinct from these edges. Hence, $\deg(v_1)\ge 3$. Now begin a walk in $C$ starting at $v_1$  and first moving along $e_1$. Terminate this walk at the first vertex you visit in $V(C\cap C')$, which we label as $v_2$. By the same logic as with $v_1$ case, we also see that $\deg(v_2)\ge3$. By construction, our walk determines a path $p_1$  that is contained in $(C-C') \cup \{v_1,v_2\}$. Clearly there exists two internally disjoint paths from $v_1$ to $v_2$ along edges in $C'$, which we denote $p_2$ and $p_3$ respectively. Since path $p_1$ lies in $C-C'$ except for its endpoints, it is internally disjoint from $p_2$ and $p_3$. Hence, $v_1$ and $v_2$ are connected by three internally disjoint paths in $C \cup C'$. 
\end{proof}

\begin{figure}
    \begin{overpic}[width=0.9\linewidth]{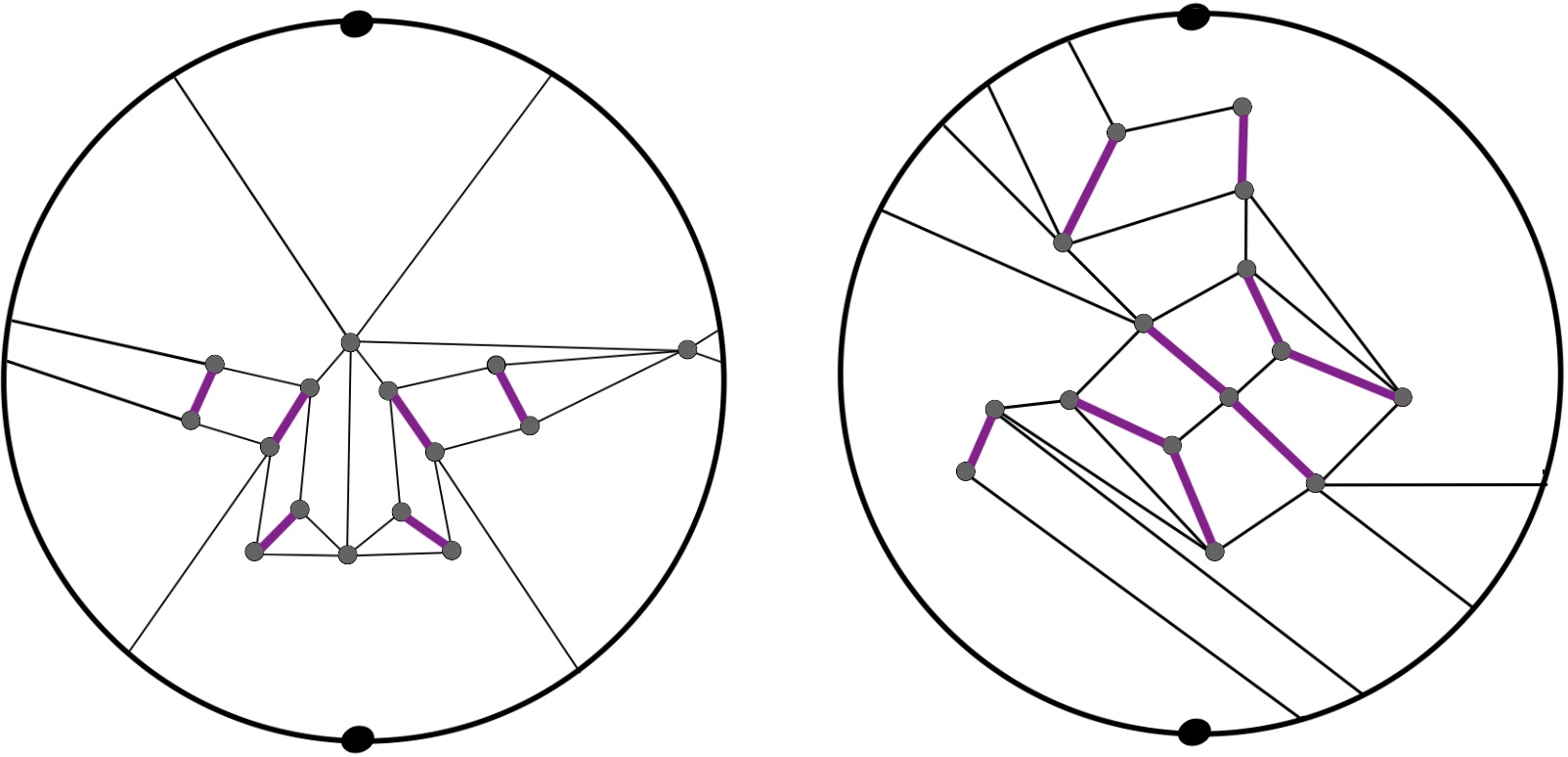}
    \put(18.5,49){$B_1\boxempty P_3$}
    \put(72,49.5){$Q_1\boxempty P_3$}
    \put(-2,25.6){\textbf{b}}
    \put(46.5,25.5){\textbf{b'}}
    \put(51.5,25.5){\textbf{b}}
    \put(100,25.5){\textbf{b'}}
     \put(23.5,27){\tiny{$(3,2)$}}
     \put(24,24.5){\tiny{$(4,2)$}}
     \put(27,20.5){\tiny{$(5,2)$}}
     \put(32.5,23.5){\tiny{$(4,3)$}}
     \put(34,20){\tiny{$(5,3)$}}
     \put(41,27){\tiny{$(3,3)$}}
     \put(12,26){\tiny{$(2,3)$}}
     \put(7,20){\tiny{$(1,3)$}}
     \put(17,25){\tiny{$(2,2)$}}
     \put(12,19){\tiny{$(1,2)$}}
     \put(14,11){\tiny{$(1,1)$}}
     \put(20,11){\tiny{$(3,1)$}}
     \put(27,11){\tiny{$(5,1)$}}
     \put(17,17){\tiny{$(2,1)$}}
     \put(23,17){\tiny{$(4,1)$}}

     \put(63,32){\tiny{$(2,2)$}}
     \put(71,41){\tiny{$(1,2)$}}
     \put(78,42.5){\tiny{$(1,1)$}}
     \put(80,36){\tiny{$(2,1)$}}
     \put(75,32){\tiny{$(5,1)$}}
     \put(90,23){\tiny{$(3,1)$}}
     \put(80.5,23.5){\tiny{$(4,1)$}}
     \put(73,22.5){\tiny{$(4,2)$}}
     \put(75.5,19){\tiny{$(4,3)$}}
     \put(85.5,18){\tiny{$(3,2)$}}
     \put(78,12){\tiny{$(3,3)$}}
     \put(65,24){\tiny{$(5,3)$}}
     \put(58.5,22){\tiny{$(2,3)$}}
     \put(56.5,17.5){\tiny{$(1,3)$}}
     \put(71.5,29){\tiny{$(5,2)$}}
    \end{overpic}
    \caption{Projective planar embeddings of $B_1\boxempty P_3$ and $Q_1 \boxempty P_3$.}
    \label{fig:N1 embeddings B,Q}
\end{figure}

\begin{thm}
    \label{thm:N1 embedding P3 Case}
    $G \boxempty P_3$ embeds on $N_1$ if and only if $G$ is a minor of $B_m$ or $Q_m$ for $m \in \mathbb{N}$.
\end{thm}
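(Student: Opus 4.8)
The plan is to prove the two directions separately. The forward (``if'') direction will rest on minor-closure of projective planarity together with one general monotonicity fact, while the reverse (``only if'') direction will be driven by the structural constraints already extracted in Lemmas \ref{lem:I box P3 nonembeding in N1}, \ref{lem:K1,5 box P3 nonembedding in N1}, \ref{lem:S box P3 nonembeding in N1}, \ref{lem:G is a tree P3,C3 case}, \ref{lem:G is pseudotree - cycles P3,C3 case}, \ref{lem:Demise of Evil Lollipop}, and \ref{lem:CycleIntersectionMinor}. The fact I would isolate at the outset is that the Cartesian product is monotone under minors in the first factor: if $G'$ is a minor of $G$, then $G' \boxempty P_3$ is a minor of $G \boxempty P_3$, because deleting a vertex, deleting an edge, or contracting an edge of $G$ can each be mirrored fiberwise across the three $P_3$-fibers inside $G \boxempty P_3$. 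Since embeddability on $N_1$ is minor-closed, this reduces the ``if'' direction to exhibiting embeddings of the two extremal products $B_m \boxempty P_3$ and $Q_m \boxempty P_3$.

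For the ``if'' direction I would take the explicit $m=1$ embeddings in Figure \ref{fig:N1 embeddings B,Q} as base cases and argue that they extend to all $m$. The subgraph of $B_m \boxempty P_3$ (respectively $Q_m \boxempty P_3$) lying over the dotted $P_{m+1}$ is the planar ladder $P_{m+1} \boxempty P_3$, glued to the rest of the product along a single $P_3$-fiber. In the displayed pictures this ladder occupies a disk region of $N_1$, so lengthening the path amounts to drawing a longer ladder inside that disk; this introduces no crossings and leaves the remainder of the embedding fixed. Hence $B_m \boxempty P_3$ and $Q_m \boxempty P_3$ embed on $N_1$ for every $m$, and by monotonicity every $G \boxempty P_3$ with $G$ a minor of $B_m$ or $Q_m$ embeds as well.

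The substance is the ``only if'' direction. Assuming $G \boxempty P_3$ embeds on $N_1$, I would first record the constraints the earlier lemmas impose on the factor $G$: $\Delta(G) \le 4$ by Lemma \ref{lem:K1,5 box P3 nonembedding in N1}, and $G$ has no $I$-minor, no $S$-minor, and no $L$-minor by Lemmas \ref{lem:I box P3 nonembeding in N1}, \ref{lem:S box P3 nonembeding in N1}, and \ref{lem:Demise of Evil Lollipop}, so that by Lemma \ref{lem:G is pseudotree - cycles P3,C3 case} any two cycles of $G$ must meet. I would then classify $G$ by its cycle structure. If $G$ is a tree, Lemma \ref{lem:G is a tree P3,C3 case} gives that $G$ is a minor of $X_m$, and since $X_m$ is itself a minor of $B_m$ we are done. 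If $G$ is unicyclic, it is a single cycle with attached trees, and the degree bound together with the excluded $S$- and $L$-minors should limit those attached trees enough to realize $G$ as a minor of $B_m$. Finally, if $G$ has at least two independent cycles, the no-$I$-minor condition and Lemma \ref{lem:G is pseudotree - cycles P3,C3 case} force them to share vertices, and Lemma \ref{lem:CycleIntersectionMinor} produces three internally disjoint paths between a pair of shared vertices, that is, a theta subgraph; bounding how further paths and pendant trees can decorate this theta under the forbidden minors should pin $G$ down as a minor of $Q_m$.

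I expect the reverse direction, and specifically the unicyclic and multicyclic cases, to be the main obstacle. The forbidden minors $I$, $S$, and $L$ only supply necessary conditions, and converting ``avoids all of these and has $\Delta(G) \le 4$'' into the sharp conclusion ``is a minor of $B_m$ or $Q_m$'' requires an exhaustive structural argument: one must verify that the listed obstructions are complete, that no further cycle-plus-tree or theta-plus-tree configuration escapes them, and that each surviving configuration genuinely deletes and contracts down into one of the stated extremal graphs. Keeping the case analysis exhaustive while certifying that every admissible $G$ really is a minor of $B_m$ or $Q_m$ is where the care will be needed.
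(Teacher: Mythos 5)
Your forward (``if'') direction is essentially the paper's argument: you make explicit the minor-monotonicity of $\boxempty$ in one factor (which the paper uses only implicitly, so stating it is a genuine improvement), and you extend the base embeddings of Figure \ref{fig:N1 embeddings B,Q} by redrawing the ladders over the dotted paths inside the disks they occupy; the paper implements the same idea by subdividing the three purple rungs and inserting a new $P_3$-fiber into the two quadrilateral faces. One inaccuracy: each such ladder attaches to the rest of the product along \emph{two} $P_3$-fibers (both ends of the dotted path), not one; since both attachment fibers lie on the boundary of the relevant disk, the replacement argument still goes through, but as stated your gluing claim is false.

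The reverse (``only if'') direction has two genuine gaps. First, your obstruction list omits $K_4$ and $K_{2,3}$, which the paper imports from Theorem \ref{thm:N1 embedding P2 Case} because $G \boxempty P_2$ is a subgraph of $G \boxempty P_3$. This is fatal as stated: $K_{2,3}$ has maximum degree three and contains no $I$, $S$, $L$, or $K_{1,5}$ minor ($I$ and $S$ have more vertices than $K_{2,3}$; $L$ and $K_{1,5}$ require a vertex of degree at least four, which no minor of $K_{2,3}$ has), so it passes every test you impose; yet $K_{2,3} \boxempty P_3$ is nonprojective planar, and $K_{2,3}$ is a minor of neither $B_m$ nor $Q_m$. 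Your multicyclic step, which extracts a theta subgraph and tries to pin $G$ down as a minor of $Q_m$, would therefore wrongly certify $K_{2,3}$, which is itself a theta graph. Second, your target assignments are provably wrong in both remaining cases. Every minor of $B_m$ has at most one vertex of degree at least three (a connected branch set avoiding the cut vertex of $B_m$ is a subpath of one cycle and has only two boundary edges), so a triangle with pendant paths at two or three distinct vertices --- which is admissible, e.g.\ $W_1$ is a minor of $Q_2$, so $W_1 \boxempty P_3$ embeds --- is not a minor of any $B_m$; hence ``unicyclic implies minor of $B_m$'' fails, and the paper instead routes these graphs to $Q_m$ (Case 2 of the proof of Theorem \ref{thm:N1 embedding P3 Case}). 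Dually, every minor of $Q_m$ has maximum degree three (a branch-set count shows any set with four boundary edges sends two of them to the same neighboring set), so the bowtie $B_1$, two triangles sharing a vertex, which is admissible, is a minor of no $Q_m$; your multicyclic case, aimed only at $Q_m$, can never capture it, and the paper instead routes cycles meeting in a single vertex to $B_m$ (Case 3, Case A). The correct dichotomy is not tree/unicyclic/multicyclic $\mapsto$ $X_m$/$B_m$/$Q_m$: rather, $B_m$ absorbs exactly the configurations in which all cycles and pendant paths attach at one common vertex, while $Q_m$ absorbs a cycle with at most one pendant path per vertex and a triangle sharing an edge with a second cycle together with a pendant path at its apex. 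Repairing your outline requires both the enlarged obstruction list and this finer, differently-aimed case split, which is precisely what the paper's Cases 2--3 carry out.
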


\begin{proof}
    We proceed in the backwards direction first, constructing $B_m$ and $Q_m$, respectively. 
    
     We first consider $B_m$, for some $m\in \mathbb{N}$. We construct the projective planar embedding of the base case $B_1 \boxempty P_3$ in Figure \ref{fig:N1 embeddings B,Q}. Now we inductively show how to build $B_m \boxempty P_3$ from the embedding of $B_1 \boxempty P_3$ in Figure \ref{fig:N1 embeddings B,Q}. We have highlighted the edges $[(1,1), (2,1)]$, $[(1,2),(2,2)],$ and $[(1,3),(2,3)]$ in purple which are contained in a $P_2 \boxempty P_3$ subgraph of $G$. Note that the edge $[(1,2), (2,2)]$ bounds two quadrilateral faces, one with $[(1,1), (2,1)]$ and one with $[(1,3),(2,3)]$. Thus we can subdivide each highlighted edge forming vertices $v_1, v_2,$ and $v_3$, respectively. Then we embed a $P_3$ fiber connecting $v_1, v_2,$ and $v_3$ in these quadrilateral faces. Likewise, we perform the same operation in the face bound by the edges $[(4,1),(5,1)], [(4,2),(5,2)]$, and $[(4,3),(5,3)]$ forming the vertices $v_4, v_5,$ and $v_6$ resulting in a projective planar embedding of $B_2 \boxempty P_3$. To build a projective planar embedding of $B_m \boxempty P_3$, we work with the two $P_2 \boxempty P_3$ subgraphs of this new embedding whose boundary contains $(1,1), v_1, (1,2), v_2, (1,3), v_3$ and $(4,1), v_4, (4,2), v_5, (4,3), v_6$ and repeat the same process. 
     
     We now consider $Q_m$ for some $m \in \mathbb{N}$. Consider the projective planar embedding of $Q_1 \boxempty P_3$ in Figure \ref{fig:N1 embeddings B,Q} as our base case. Note that, the index $m$ determines the construction of a $C_{2m+1}$ subgraph and a  $P_{m+1}$ subgraph in $Q_m$ that are disjoint; we deal with these cases separately to build $Q_m \boxempty P_3$. First consider the edges $[(3,1),(4,1)], [(3,2),(4,2)],$ and $[(3,3),(4,3)]$ highlighted in purple which are contained in a $P_2 \boxempty P_3$ subgraph of $Q_1 \boxempty P_3$. We use the same technique as in the $B_m$ case: since the edges are bound in two quadrilateral faces, subdivide each highlighted edge and connect the new vertices with an embedded $P_3$ fiber. Likewise perform the same operation on the edges $[(4,1),(5,1)], [(4,2),(5,2)],$ and $[(4,3),(5,3)]$. This shows how to construct the $C_{2m+1}$ subgraph inductively, so we now consider the $P_m$ subgraph. Note that the edges $[(1,1),(2,1)], [(1,2),(2,2)],$ and $[(1,3),(2,3)]$ are similarly found in a $P_2 \boxempty P_3$ subgraph bounding two quadrilateral faces, so we apply the same technique. This results in a projective planar embedding of $Q_2 \boxempty P_3$ and we can proceed inductively again.\\

    We now consider the forward direction, and so, we suppose $G \boxempty P_3$ is projective planar. We will show that $G$ is either a minor of $B_m$ or $Q_m$ for some $m \in \mathbb{N}$ or that $G \boxempty P_3$ is nonprojective planar, contradicting our assumption. By Theorem \ref{thm:N1 embedding P2 Case}, Lemma \ref{lem:I box P3 nonembeding in N1}, Lemma \ref{lem:K1,5 box P3 nonembedding in N1}, Lemma \ref{lem:S box P3 nonembeding in N1}, and Lemma \ref{lem:Demise of Evil Lollipop}, it suffices to show that $G$ is either a minor of $B_m$ or $Q_m$ for some $m \in \mathbb{N}$ or $G$ contains a $K_{4}$, $K_{2,3}$, $I$, $K_{1,5}$, $S$, or $L$ minor (collectively, forbidden minors for $G$), which contradicts $G \boxempty H$ being projective planar. 
    
    We consider three cases: either $G$ is a tree, $G$ has exactly one cycle, or $G$ has at least two distinct cycles. 
    
    \underline{Case 1:} If  $G$ is a tree, then by Lemma \ref{lem:G is a tree P3,C3 case}, $G$ is a minor of $X_m$ for some $m \in \mathbb{N}$, which is a subgraph of $B_m$. 

    \underline{Case 2:} Suppose $G$ contains exactly one cycle, $C$.  We break this case into sub-cases based on whether $C = C_3$ or $C = C_n$ for $n \geq 4$.
    
    Now, suppose $C = C_3$ is the only cycle in $G$. If $G$ is not a minor of $B_m$, then $E(G) -E(C) \neq \emptyset$.  First, suppose there exists $v_0 \in V(C)$ such that there are at least three edges in $E(G)-E(C)$ incident to $v_0$. Then $G$ contains a $K_{1,5}$ subgraph, which is a contradiction. Now, suppose there exists $v_{1} \in V(C)$ such that there are exactly two distinct edges, $e_1$ and $e_2$, in $E(G)-E(C)$ incident to $v_1$. If $G$ is not a minor of $B_m$, then there exists an edge $e_3$ in $E(G)-E(C)$ such that $e_3 \neq e_i$ for $i=1,2$ and $e_3$ is incident to either some $v_2 \in V(C)$ with $v_2 \neq v_1$ or $e_3$ is incident to a degree three or greater vertex on some subpath of $G - E(C)$ that contains either $e_1$ or $e_2$. In the former case, $G$ contains an $I$ subgraph with $v_1$ and $v_2$ corresponding to the degree three vertices in this $I$ subgraph, which gives a contradiction. In the latter case, $G$ again contains an $I$ subgraph. At this point, we can assume each $v \in V(C)$ has at most one edge in $E(G)-E(C)$ incident to it and at least one vertex of $V(C)$ is incident to such an edge. Consider the components of the forest $G-E(C)$. By the previous statements, there are one, two, or  three such components and at most one of these components is incident to each vertex of $V(C)$. If any of these components contains a degree three vertex, then $G$ contains an $I$ minor. So we can assume each such tree in $G - E(C)$ only has vertices of degree at most two. Thus, each component is a path, and there is at most one such path connected to each vertex in $C$. In any of these cases, $G$ is a minor of $Q_m$ for some $m \in \mathbb{N}$.  

    Now, suppose $G$ has exactly one cycle $C = C_n$, for some $n \geq 4$. If $G$ is not a minor of any $Q_m$ for $m \in \mathbb{N}$, then we see that $E(G) - E(C) \neq \emptyset$. Following the same argument as was done in the previous case,  we see that for each $v \in V(C)$ there is at most one edge in $E(G)-E(C)$ incident to $v$. Furthermore,  at least one $v \in V(C)$ must be incident to some edge in $E(G)-E(C)$ since $E(G) - E(C) \neq \emptyset$ and $G$ is connected. Again, similar to the previous case we consider the nonempty forest $G - E(C)$. Each component of $G - E(C)$ must be a path, since otherwise, $G$ contains an $I$ minor. If $G-E(C)$ contains two or more path components, then $G$ either contains  an $I$ minor or $S$ minor, depending on whether these components are incident to adjacent or nonadjacent vertices, respectively, in $V(C)$. Otherwise, $G-E(C)$ must have a single path component, and so, $G$ is a minor of $Q_m$ for some $m \in \mathbb{N}$.  

    \underline{Case 3:} Now, suppose $G$ has at least two distinct cycles, $C$ and $C'$. We consider two sub-cases: either $C \cap C'$ contains at least two distinct vertices or $C \cap C' = \{v_0\}$ for some $v_0 \in V(G)$. 
    
    \underline{Case A:} First,  suppose $C \cap C' = \{v_0\}$, for some $v_0 \in V(G)$. If $G$ is not a minor of $B_m$,  then $E(G)-E(C \cup C')$ is nonempty. Furthermore, since $G$ is connected, there must exist $e \in E(G) -E(C \cup C')$ such that at least one vertex of $e$ is incident to some vertex in $V(C \cup C')$. If $e$ is incident to $v_0$, then $\deg(v_0) \geq 5$, and so, $G$ contains a $K_{1,5}$ minor. If $e$ is incident to only one vertex in $V(C \cup C') - \{v_{0}\}$, then $G$ contains an $I$ minor.  Lastly, suppose $e$ is incident to two distinct vertices in $V(C \cup C')  - \{v_{0}\}$. Then $G$ contains an $L$ minor.

   \underline{Case B:}  Now suppose $C \cap C'$ contains at least two distinct vertices. First, suppose the shortest path in $C \cup C'$ between any two vertices in $C \cap C'$ contains at least two edges. Then Lemma \ref{lem:CycleIntersectionMinor} shows that there exists $v_1, v_2 \in V(C \cap C')$ such that there are three internally disjoint paths between $v_1$ and $v_2$ in $C \cup C'$. Since we assumed any path from $v_1$ to $v_2$ can not be a single edge, these three internally disjoint paths in $C \cup C'$ determine a $K_{2,3}$ minor in $C \cup C'$ where $v_1$ and $v_2$ represent the degree three vertices for this minor. This shows $G$ has a $K_{2,3}$ minor, and we are done in this case. So now suppose the shortest path in $C \cup C'$ between two vertices $v_{1}, v_{2} \in C \cap C'$ is exactly one edge. We first claim that there exist cycles $C''$ and $C'''$ in $G$ such that $C'' \cap C'''$ is exactly one edge. First, note that $C$ and $C'$ could meet these qualifications. If they don't, then Lemma \ref{lem:CycleIntersectionMinor} provides three internally disjoint paths  in $C \cup C'$ between some $v, v' \in V(C \cap C')$. One of these paths must be a single edge, $e$, since otherwise, $C \cup C'$ has a $K_{2,3}$ minor and we are done. Let $P$ and $P'$ represent the other internally disjoint paths between $v$ and $v'$. Then $C'' = P \cup \{e\}$ and $C''' = P' \cup \{e\}$ works. 
   
   Moving forward, we relabel so that $C$ and $C'$ are the cycles in $G$ such that $C \cap C'$ is precisely one edge $e = (v_{1}, v_{2})$ of $G$.   We again need to consider sub-cases: either  both $C$ and $C'$ are not $3$-cycles or at least one of these cycles is a $3$-cycle.

   \underline{Case i:} Suppose $C = C_n$ and $C' = C_{l}$ for $n,l \geq 4$. Since $C_n$ and $C_l$ intersect along an edge whose endpoints are $v_{1},v_{2} \in V(G)$, we see that $C \cup C'$ contains an $I$ subgraph where $v_{1}$ and $v_{2}$ are the degree three vertices for this $I$ subgraph.

   \underline{Case ii:} Suppose $C = C_3$ and $C' = C_{n}$ for $n \geq 3$. If $G = C \cup C'$, then $G$ is a minor of $Q_{m}$ for some $m \in \mathbb{N}$. So, we assume $G - (C \cup C') \neq \emptyset$. Since $G$ is connected, this implies that there exists some $e' \in E(G) - E(C \cup C')$ that is incident to $C \cup C'$. First, suppose $e'$ is incident to two vertices in $C \cup C'$. Since $G$ is a simple graph, there are only a few distinct cases to consider for such $e'$. If $C' = C_3$, then there is only one possibility for $e'$, specifically, $e'$ is incident to the two vertices in $V(C \cup C') - \{v_1, v_2\}$. In this case, $G$ admits a $K_4$ minor. If $C' = C_{n}$ for $n \geq 4$, then there are two possibilities: either $e'$ is incident to one vertex in $\{v_{1}, v_{2}\}$ and one vertex in $V(C') - \{v_{1}, v_{2}\}$ or $e'$ is incident to two vertices in $V(C') - \{v_{1}, v_{2}\}$. In either case, it is not hard to see $C \cup C' \cup \{e'\}$ contains an $L$ minor. Moving forward, we no longer need to consider any edges incident to two vertices in $V(C \cup C')$. So, suppose $e'$ is incident to only one vertex in $V(C \cup C')$. Consider $H = C \cup C' \cup \{e'\}$. Let $\{v_{3}\} = V(C) - \{v_1, v_2\}$. If $e'$ is incident to $v_3$, then $H$ is a subgraph of some $Q_m$. Likewise, if $C' = C_3$ and $e'$ is incident to $v \in V(C') - \{v_{1},v_{2}\}$, then $H$ is a subgraph of $Q_m$. In all other cases, $H$ contains an $L$ minor, and so, the same holds for $G$. If $G - H \neq \emptyset$, then there exists edge $e'' \in E(G) - E(H)$ that is incident to $H$. First, if $C' = C_{3}$, then $e''$ could be incident to the unique degree two vertex in $H$, which is the one scenario unique to $C' = C_{3}$. However, in this case $H$ has an $S$-minor, and so, $G$ has an $S$ minor. By the previous arguments, if $G$ does not contain a forbidden minor, then there are considerable restrictions on the incidence structure of $e''$ in $G$: $e''$ must be incident to one vertex of $e'$ and the other vertex of $e''$ must be disjoint from $H$. Consider the subgraph $H' = C \cup C' \cup \{e', e''\}$. Then there are two possibilities for the structure of $H$. In one case $H'$ is a minor of some $Q_{m}$ and in the other  case $H'$ contains an $I$ minor. We now only need to consider what happens if $G - H' \neq \emptyset$ where $H' = C \cup C' \cup \{e', e''\}$ and $H'$ is a subgraph of some $Q_m$. By the previous restrictions on additional edges (to make sure we don't we have a forbidden minor), we see that  $G-E(H')$ could only be a path connected to the degree one vertex of $e''$, and so, $G$ must be a minor of some $Q_m$.  
\end{proof}

\subsection{One factor is $C_3$} In this section, we assume $H= C_3$ and $G$ is a simple, connected graph. Here, we classify exactly which $G \boxempty C_3$ embed in $N_1$.
\label{subsec:C3}

\begin{lemma}
    \label{lem: R box C3 nonembedding in N1}
    $R \boxempty C_3$ does not embed on $N_1$.
\end{lemma}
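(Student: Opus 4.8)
The plan is to follow exactly the strategy used in Lemmas \ref{lem:I box P3 nonembeding in N1} through \ref{lem:Demise of Evil Lollipop}: rather than reasoning about embeddings directly, I would exhibit an explicit member of $\Forb(N_1)$ as a minor of $R \boxempty C_3$, which immediately forces non-embeddability. The one conceptual observation that should guide the whole construction is this. Since $P_3$ is a subgraph of $C_3$, the product $R \boxempty P_3$ is a subgraph of $R \boxempty C_3$; and $R$ is presumably a graph for which $R \boxempty P_3$ \emph{does} embed (otherwise the earlier $P_3$ results, via Theorem \ref{thm:N1 embedding P3 Case}, would already apply and no separate $C_3$ lemma would be needed). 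Consequently the forbidden minor I build must genuinely use the third, wrap-around edge of each $C_3$-fiber. This tells me where to look: at least one key adjacency of the target graph must be supplied by an edge that lives in a $C_3$-fiber but is absent from the corresponding $P_3$-fiber.

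Concretely, I would set up the now-standard coordinate system, writing each vertex as a pair $(i,j)$ in which the first coordinate indexes an $R$-fiber and the second indexes the three vertices $1,2,3$ of a $C_3$-fiber, and I would label the vertices of $R$ according to Figure \ref{fig:Relevant graphs P3,C3 case}. The construction then proceeds in three stages: (i) delete a small set of fiber edges to reduce the graph to the relevant skeleton; (ii) contract each $C_3$-fiber, or the surviving portion of it, so that the three copies of each $R$-vertex collapse to the intended branch vertices; and (iii) read off the result and match it against the list in Appendix \ref{sec:Appendix}. Because the earlier arguments already realize $\mathcal{D}_{17}$, $\mathcal{G}_1$, $\mathcal{E}_3$, $\mathcal{E}_5$, and $\mathcal{E}_{22}$, and because the paper asserts that only six of the $35$ forbidden minors are needed, I expect the natural target here to be the one remaining forbidden minor, obtained by collapsing the triangular fibers and using their wrap-around edges to complete the required adjacencies.

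The main obstacle, as in the companion lemmas, is bookkeeping rather than ideas. The subtle point specific to $C_3$ is that contracting a single edge of a triangular fiber leaves a multi-edge that must be simplified, so I must track carefully which contractions are ``safe'' (merging a degree-two fiber vertex) versus which collapse an entire fiber to one branch vertex; mishandling this typically produces the correct graph up to one spurious or one missing edge. I would therefore fix the deletion/contraction sequence so that the contracted $C_3$-fibers yield \emph{exactly} the branch vertices of the target, verify that no extraneous edges survive and no required adjacency is lost, and then confirm an honest isomorphism with the intended element of $\Forb(N_1)$ rather than a supergraph or a near-miss. Once the labeled construction is pinned down, checking the isomorphism against the appendix figure is routine, and it follows at once that $R \boxempty C_3$ does not embed on $N_1$.
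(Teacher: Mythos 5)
Your guiding observations are sound: exhibiting an element of $\Forb(N_1)$ as a minor is indeed the right strategy, and your structural remark is correct --- $R$ is a minor of $B_1$ (delete from the bowtie the edge of one triangle opposite the shared vertex), so $R \boxempty P_3$ is projective planar by Theorem \ref{thm:N1 embedding P3 Case}, and any forbidden minor inside $R \boxempty C_3$ must therefore use wrap-around edges of the $C_3$ fibers. The problem is that the proposal stops exactly where the proof has to begin. For this lemma, as for its companions, the proof \emph{is} the explicit sequence of deletions and contractions together with the verification that the result is a specific forbidden minor; your stages (i)--(iii) describe what such a construction would look like without producing one. Writing that you ``would fix the deletion/contraction sequence so that the contracted $C_3$-fibers yield exactly the branch vertices of the target'' defers the entire mathematical content of the lemma, so as written nothing has been established.

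The one concrete commitment you do make --- that the target should be $\mathcal{E}_{18}$, ``the one remaining'' of the six forbidden minors --- is both unsupported and different from what the paper does. The paper targets $\mathcal{G}_1$ a second time, guided by the decomposition $\mathcal{G}_1 = (K_{2,3} \boxempty P_2) - 2e$: the $C_3$ fibers over the two pendant vertices of $R$ are each contracted to a single vertex, and together with the three vertices of the fiber over the degree-four vertex of $R$ (whose fiber edges are deleted) they form one $K_{2,3}$; the second $K_{2,3}$ is carved out of the $C_3 \boxempty C_3$ sub-product lying over the triangle of $R$, and it is precisely there that a wrap-around edge such as $[(2,1),(2,3)]$ is needed, confirming your heuristic but by a construction you never supply. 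Your inference that each lemma should consume a ``new'' forbidden minor is not valid: the six graphs are reused freely ($\mathcal{G}_1$ appears in Theorem \ref{thm:N1 embedding P2 Case}, Lemma \ref{lem:I box P3 nonembeding in N1}, this lemma, and Lemma \ref{lem:P4 box K1,3 nonembedding in N1}, and $\mathcal{E}_{22}$ in Lemmas \ref{lem:S box P3 nonembeding in N1} and \ref{lem:C4 box C3 nonembedding in N1}), while $\mathcal{E}_{18} = K_{4,4}-e$ enters only in Lemma \ref{lem:K1,4 box K1,3 doesn't embed}, where the star structure of $K_{1,4} \boxempty K_{1,3}$ collapses naturally onto $K_{4,4}$. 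You give no argument that $\mathcal{E}_{18}$ is a minor of $R \boxempty C_3$ at all, and if it is not, stage (iii) of your plan fails with no fallback. To repair the proof you must fix a realizable target and carry out the construction edge by edge, as the paper does with $\mathcal{G}_1$.
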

\begin{proof}
    We claim that $R \boxempty C_3$ contains a $\mathcal G_1$ forbidden minor for the projective plane. Note that we can decompose $\mathcal{G}_1$ into $(K_{2,3} \boxempty P_2) -2e$ where we delete the two edges connecting pairs of degree four vertices in disjoint $K_{2,3}$ fibers. We use this decomposition to construct the $\mathcal{G}_1$ minor. We assign vertex labels based on a coordinate system where the first coordinate represents position in an $R$ fiber and the second coordinate denotes a $C_3$ fiber. For the $R$ fibers we label the vertices in the $C_3$ subgraph 1-3 in a clockwise manner. We label the remaining two vertices $4$ and $5$ and assume $3$ is the label for the unique degree four vertex in an $R$ fiber. Likewise, we label the vertices on the $C_3$ fibers 1,2, and 3 in a clockwise manner. We will provide labels for new vertices resulting from edge contraction while all other vertices retained their labels from the previous step. We construct the $\mathcal{G}_1$ minor by first deleting the $C_3$ fiber  edges between vertices $(3,1), (3,2),$ and $(3,3)$. Next we edge contract the three edges on the $C_3$ fiber between vertices $(4,1), (4,2)$, and $(4,3)$ and label this single vertex $v_4$. Likewise, we edge contract the three edges between the vertices $(5,1), (5,2)$, and $(5,3)$, labeling this vertex $v_5$. Note that, the vertices $v_4$, $v_5$, $(3,1), (3,2),$ and $(3,3)$ induce  a $K_{2,3}$ subgraph in this minor. None of our forthcoming minor operations will affect this subgraph. Then we delete the edge between $[(2,1),(2,2)]$ and $[(1,1), (1,3)]$. Now we choose to edge contract vertices $(1,1)$ and $(1,2)$ into $v_1$.  Finally, we delete the edges $[v_1,(3,1)]$, $[v_1,(3,2)]$, and $[(2,3),(3,3)]$. This gives us a second copy of $K_{2,3}$ with $v_1$ and $(2,3)$ in the set of two vertices and $(2,1), (2,2)$, and $(1,3)$ in the set of three vertices. Since we have the three edges $[(2,1),(3,1)]$, $[(2,2),(3,2)]$, and $[(1,3),(3,3)]$ connecting the two copies of $K_{2,3}$, we have formed a $\mathcal{G}_1$ minor of $R \boxempty C_3$. Since $\mathcal{G}_1 \in$ Forb($N_1$), $R \boxempty C_3$ does not embed on the projective plane.
\end{proof}

\begin{lemma}
\label{lem:C4 box C3 nonembedding in N1}
    $C_4 \boxempty C_3$ is nonprojective planar.
\end{lemma}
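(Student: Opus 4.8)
The plan is to rule out any projective planar embedding directly via Euler's formula together with a parity argument on the possible faces, rather than by exhibiting a single forbidden minor. First I would record the basic data: $C_4 \boxempty C_3$ is a simple, $4$-regular graph with $V=12$ vertices and $E=24$ edges, and it is $2$-connected (a Cartesian product of $2$-connected graphs). A short projection argument shows that its only triangles are the four $C_3$-fibers: in any $3$-cycle the $C_4$-edges must trace a closed walk in $C_4$ and the $C_3$-edges a closed walk in $C_3$, and checking the possible splits (by number of $C_4$- vs.\ $C_3$-edges) leaves only the three-$C_3$-edge case, i.e.\ a $C_3$-fiber. I would also note $C_4 \boxempty C_3$ is not planar: it is not of any of the forms $P_m \boxempty P_n$, $P_m \boxempty C_n$, or $G \boxempty P_2$ (with $G$ outerplanar) appearing in the classification of planar Cartesian products \cite{BeMa1969}, since neither factor is a path.

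Next, assume for contradiction that $C_4 \boxempty C_3$ embeds in $N_1$. Since the graph is non-planar, this embedding must be a $2$-cell embedding (otherwise a face would contain a non-contractible curve, which upon cutting forces the graph into a disk and hence into the plane) \cite{MoTh2001}; moreover $2$-connectivity guarantees every face is bounded by a cycle. Euler's formula on $N_1$ gives $F = 1 - V + E = 13$. Because the graph is simple with girth $3$, every face has length at least $3$, and a triangle cannot bound faces on both sides here: that would force each vertex of the triangle to have degree $2$, contradicting $4$-regularity. Hence only the four triangles can be triangular faces, so $f_3 \leq 4$. Counting edge--face incidences, $2E = 48 = \sum_f \ell(f) \geq 3 f_3 + 4(13 - f_3) = 52 - f_3$ forces $f_3 \geq 4$, and therefore $f_3 = 4$ with equality throughout: exactly the four $C_3$-fibers are faces and the remaining nine faces are quadrilaterals.

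Then I would classify the $4$-cycles available as quadrilateral faces: the three $C_4$-fibers, and the twelve ``prism squares'' spanned by one $C_4$-edge together with one $C_3$-edge. The incidence bookkeeping is the heart of the argument. Each of the twelve $C_3$-fiber edges already lies on its triangular face, so it needs exactly one further quadrilateral face, necessarily a prism square; for a fixed $C_3$-edge $xy$ this forces the chosen prism squares with that $xy$ to realize a perfect matching $M_{xy}$ of the four corresponding fiber edges arranged around $C_4$, of which there are exactly two. This accounts for six quadrilateral faces, so the remaining three faces must be the three $C_4$-fibers, which are therefore all faces. Finally, requiring each $C_4$-fiber edge to lie on exactly two faces forces, at each vertex $x$ of $C_3$, the two matchings on its incident $C_3$-edges to be distinct. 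Running this over the three vertices of $C_3$ requires $M_{ab}, M_{bc}, M_{ca}$ to be pairwise distinct, which is impossible since each is one of only two matchings. This contradiction completes the proof.

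The step I expect to be the main obstacle is the incidence bookkeeping in the final paragraph: one must argue cleanly that the forced face structure (four triangles, nine quadrilaterals) leaves only prism squares and $C_4$-fibers as candidate faces, and then extract the matching-parity constraint. The reductions to ``exactly one further face per edge'' are elementary but must be stated carefully to land on the clean statement that three two-valued matchings cannot be pairwise distinct.
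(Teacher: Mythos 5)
Your argument is correct, but it takes a genuinely different route from the paper. The paper disposes of this lemma in three lines by exhibiting a forbidden minor: contract one $C_4$ fiber to a single vertex $v_0$; since the other factor is a triangle, $v_0$ becomes adjacent to all eight remaining vertices (which span a cube $C_4 \boxempty P_2$), and deleting four of the edges at $v_0$ yields $\mathcal{E}_{22} \in \Forb(N_1)$, so Archdeacon's theorem \cite{Ar1980} finishes the proof. Your proof never touches the forbidden-minor list: you rule out an embedding directly, using the standard fact that a connected non-planar graph embedded in $N_1$ must be cellularly embedded \cite{MoTh2001}, Euler's formula to pin down the face vector (four triangular faces, necessarily the four $C_3$-fibers, plus nine quadrilateral faces), the classification of all $3$- and $4$-cycles of $C_4 \boxempty C_3$, and finally a parity obstruction: the matchings $M_{ab}$, $M_{bc}$, $M_{ca}$ would have to be pairwise distinct, yet $C_4$ has only two perfect matchings. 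The trade-off is clear: the paper's route is short and uniform with the rest of the paper (every nonembeddability result there is proved by locating one of the six minors that feed Corollary~\ref{cor:MMNPP}), while yours is self-contained --- it needs neither the $35$-graph list nor the identification of $\mathcal{E}_{22}$ --- and it exposes the structural reason for failure, namely that the odd cycle $C_3$ forces three pairwise-distinct choices from a two-element set. I checked the face count and the matching bookkeeping; both are sound.

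One justification is wrong as stated, though the step it supports is true and easily repaired. You assert that $2$-connectivity guarantees every face is bounded by a cycle; this is false for embeddings in $N_1$ --- a $4$-cycle embedded noncontractibly in $N_1$ is $2$-connected and cellularly embedded, yet its unique face has a boundary walk of length $8$ traversing each edge twice. What your counting argument actually needs is weaker: every boundary walk has length at least $3$, and any boundary walk of length $3$ or $4$ is a cycle. Both follow from simplicity together with $4$-regularity: if a boundary walk enters and leaves a vertex along the same edge, that vertex has degree $1$, so consecutive edges of every boundary walk are distinct; and a closed walk of length at most $4$ with distinct consecutive edges in a simple graph cannot repeat a vertex. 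With that local substitution, your proof is complete.
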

\begin{proof}
We will show that $C_4 \boxempty C_3$ contains a $E_{22}$ minor and since $E_{22} \in \Forb(N_1)$,  this will complete the proof. Take one of the $C_4$ fibers of $C_4 \boxempty C_3$  and edge contract along all four of its edges. This reduces this $C_4$ fiber to a single vertex, $v_0$, of degree eight with an edge connecting $v_0$ to each of the remaining eight vertices. Deleting four of the edges incident to $v_0$ then creates the necessary $E_{22}$ minor.   
\end{proof}

\begin{lemma}
    \label{lem: G psuedotree - cycles C3 case}
    If $G \boxempty C_3$ is projective planar, then $G$ contains at most one cycle, and if such a cycle exists in $G$, then it must be a 3-cycle.
\end{lemma}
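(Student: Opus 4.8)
The plan is to exploit two facts. First, $G \boxempty P_3$ is a subgraph of $G \boxempty C_3$, obtained by deleting, in every $C_3$-fiber, the edge corresponding to one fixed edge of $C_3$; hence if $G \boxempty C_3$ is projective planar, so is $G \boxempty P_3$. Second, the Cartesian product is minor-monotone in its first factor: if $G'$ is a minor of $G$, then $G' \boxempty C_3$ is a minor of $G \boxempty C_3$. This holds because each minor operation on $G$ lifts to $G \boxempty C_3$ --- deleting an edge $uv$ of $G$ corresponds to deleting the three matching edges joining the $C_3$-fibers over $u$ and $v$, deleting a vertex of $G$ corresponds to deleting its entire $C_3$-fiber, and contracting $uv$ corresponds to contracting those three matching edges. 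Since projective planarity is minor-closed, Lemma \ref{lem:C4 box C3 nonembedding in N1} and Lemma \ref{lem: R box C3 nonembedding in N1} then guarantee that $G$ can contain neither $C_4$ nor $R$ as a minor.

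First I would show that every cycle of $G$ is a $3$-cycle. If $G$ contained a cycle $C_n$ with $n \ge 4$, then contracting $n-4$ of its edges exhibits $C_4$ as a minor of $G$, so $C_4 \boxempty C_3$ is a minor of $G \boxempty C_3$, contradicting Lemma \ref{lem:C4 box C3 nonembedding in N1}. Thus no cycle of length at least four occurs, and every cycle of $G$ is a triangle.

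Next I would show that $G$ has at most one cycle. Suppose instead that $G$ contains two distinct cycles $C$ and $C'$; by the previous step both are triangles. Because $G \boxempty P_3$ is projective planar, Lemma \ref{lem:G is pseudotree - cycles P3,C3 case} gives $C \cap C' \neq \emptyset$, so the two triangles share at least one vertex. If they share two vertices, they share the edge joining them, and $C \cup C'$ is a \emph{diamond} containing a $4$-cycle as a subgraph, contradicting the previous step. If they share exactly one vertex $v_0$, then $C \cup C'$ is a \emph{bowtie}; deleting the edge of $C'$ not incident to $v_0$ leaves a triangle together with two pendant edges at $v_0$, which is exactly $R$. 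Thus $R$ is a subgraph, hence a minor, of $G$, so $R \boxempty C_3$ is a minor of $G \boxempty C_3$, contradicting Lemma \ref{lem: R box C3 nonembedding in N1}. Either way we reach a contradiction, so $G$ has at most one cycle.

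I expect the only real subtlety to be the two structural containments in the last step --- that two triangles sharing an edge contain a $C_4$ and that two triangles sharing a single vertex contain $R$ --- together with a clean statement of the minor-monotonicity of $\boxempty$. Both are elementary once the cases are laid out, and the enumeration of how two distinct triangles can meet (sharing zero, one, or two vertices, with the zero case excluded by Lemma \ref{lem:G is pseudotree - cycles P3,C3 case}) is exhaustive.
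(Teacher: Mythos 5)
Your proof is correct and takes essentially the same route as the paper's: rule out cycles of length at least four via Lemma \ref{lem:C4 box C3 nonembedding in N1}, then use Lemma \ref{lem:G is pseudotree - cycles P3,C3 case} to force two triangles to intersect, and conclude by the same vertex/edge intersection case analysis producing an $R$ subgraph or a $C_4$. Your explicit justifications that $G \boxempty P_3$ is a subgraph of $G \boxempty C_3$ and that the Cartesian product is minor-monotone in its first factor are points the paper leaves implicit, but the underlying argument is identical.
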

\begin{proof}
Suppose $G \boxempty C_3$ embeds on $N_1$. Lemma \ref{lem:C4 box C3 nonembedding in N1} shows that $C_4\boxempty C_3$ does not embed in $N_1$, and so, it follows that $G$ does not contain an $n$-cycle for any $n \geq 4$. Thus, we only consider the existence of $3$-cycles in $G$. Now suppose for contradiction that $G$ contains at least two distinct $3$-cycles, say $C$ and $C'$. By Lemma \ref{lem:G is pseudotree - cycles P3,C3 case}, we know $C \cap C' \neq \emptyset$. As $G$ is simple, $C \cap C'$ must either be a single vertex or a single edge. If $C \cap C'$ is a single vertex in $G$, then $G$ contains an $R$ subgraph obtained by deleting an edge from $C \cup C'$. If $C \cap C'$ is an edge $e$, then $G$ has a $C_4$ subgraph obtained by edge deleting $e$ from $C \cup C'$. By Lemma \ref{lem: R box C3 nonembedding in N1} and Lemma \ref{lem:C4 box C3 nonembedding in N1}, $G \boxempty C_3$ do not embed in $N_1$, giving a contradiction. Therefore, $G$ has at most one cycle, which, if it exists, is a 3-cycle.
\end{proof}

\begin{figure}
\centering
\begin{overpic}[width=0.8\linewidth]{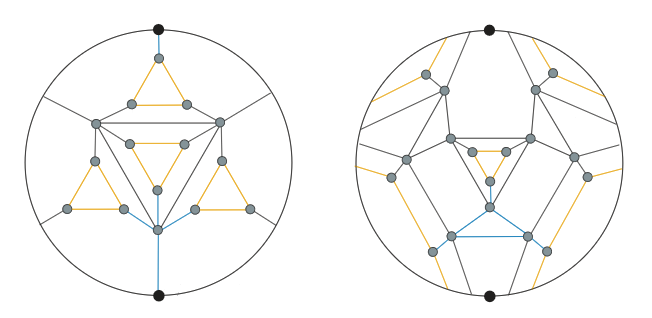}
    \put(19,47){$X_1\boxempty C_3$}
    \put(69.5,47){$W_1\boxempty C_3$}
    \put(1,23){\textbf{b}}
    \put(46,23){\textbf{b'}}
    \put(52,23){\textbf{b}}
    \put(96.5,23){\textbf{b'}}
    \end{overpic}
    \caption{Projective planar embeddings of $X_1\boxempty C_3$ and $W_1\boxempty C_3$.}
    \label{fig:N1 embeddings X,W}
\end{figure}
\begin{thm}
    \label{thm:N1 embedding C3 Case}
    $G \boxempty C_3$ embeds on $N_1$ if and only if $G$ is a minor of $W_m$ or $X_{m}$ for some $m \in \mathbb{N}$.
\end{thm}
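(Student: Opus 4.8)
The plan is to mirror the structure of the $P_3$ case (Theorem \ref{thm:N1 embedding P3 Case}), exploiting the parallel setup established by the shared lemmas in Section \ref{sec:ClassificationIII}. For the backward direction, I would construct explicit projective planar embeddings of $X_m \boxempty C_3$ and $W_m \boxempty C_3$. Using Figure \ref{fig:N1 embeddings X,W} as base cases for $X_1 \boxempty C_3$ and $W_1 \boxempty C_3$, I would identify a $P_2 \boxempty C_3$ subgraph whose highlighted edges each bound two quadrilateral faces, then inductively subdivide those edges and embed a new $C_3$ fiber (rather than a $P_3$ fiber as in the $B_m$ construction) into the available faces. This extends a path in the $X_m$ direction and, for $W_m$, handles both the cycle and the dangling path portions separately, exactly as in the $Q_m$ argument.

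For the forward direction, suppose $G \boxempty C_3$ is projective planar. The key structural reduction is Lemma \ref{lem: G psuedotree - cycles C3 case}, which already forces $G$ to contain at most one cycle, and any such cycle to be a $3$-cycle. This is a much stronger constraint than in the $P_3$ case, where $C_n$-cycles for all $n$ and multiple intersecting cycles had to be analyzed. I would then split into two cases. If $G$ is a tree, Lemma \ref{lem:G is a tree P3,C3 case} gives directly that $G$ is a minor of $X_m$. If $G$ contains exactly one cycle $C = C_3$, I would analyze the forest $G - E(C)$ attached to $C$, using the forbidden-minor obstructions established for the $C_3$ setting: a degree-$5$ vertex yields a $K_{1,5}$-type obstruction, a vertex of $C$ with two attached edges or an attached branch vertex yields an $R$ or $C_4$ obstruction via the relevant lemmas, and these restrictions force each component of $G - E(C)$ to be a single path attached to a distinct vertex of $C$, which is precisely the structure of a minor of $W_m$.

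The analog to Lemma \ref{lem:K1,5 box P3 nonembedding in N1} (that $\deg(v) \le 4$) should still hold in the $C_3$ setting, and I expect it, together with the single-cycle constraint, to be the main workhorse. The principal obstacle will be assembling the correct list of forbidden minors for $G$ in the $C_3$ case and verifying that the degree and attachment constraints they impose exactly carve out minors of $W_m$ and $X_m$ with nothing left over. In particular, I would need an analog of the $S$-type obstruction to rule out two separate path components attaching to nonadjacent vertices of the $3$-cycle, since $W_m$ permits at most one nontrivial attachment to the cycle. Carefully ruling out these extra configurations, while confirming that every surviving configuration embeds, is where the bookkeeping is most delicate, though the single-cycle reduction makes the case analysis considerably shorter than the $P_3$ argument.
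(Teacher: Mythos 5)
Your forward-direction skeleton is essentially the paper's (Lemma \ref{lem: G psuedotree - cycles C3 case} reduces to a tree or a unique $3$-cycle; Lemma \ref{lem:G is a tree P3,C3 case} handles trees; the $R$ obstruction of Lemma \ref{lem: R box C3 nonembedding in N1} limits attachments to the cycle), but your proposal contains a genuine error about what $W_m$ is, and it surfaces exactly at the step you flag as the crux. The graph $W_m$ is a triangle with a path of length $m$ attached to \emph{each} of its three vertices; its cycle does not grow with $m$. Consequently: (i) there are no ``nonadjacent vertices of the $3$-cycle'' -- all vertices of $C_3$ are pairwise adjacent -- and (ii) two or three path components attached at \emph{distinct} vertices of the triangle form a subgraph of $W_m$, which is projective planar by the backward direction (Figure \ref{fig:N1 embeddings X,W} embeds $W_1 \boxempty C_3$, where $W_1$ is the triangle with a pendant edge at every vertex). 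So the ``$S$-type obstruction'' you say you need cannot exist, and searching for it would either dead-end or, worse, lead you to exclude products that do embed. The only attachment obstruction needed is $R$ (two components meeting the \emph{same} cycle vertex), which your middle paragraph already invokes correctly -- indeed your proposal is internally inconsistent here, since that paragraph concludes that components attached to distinct vertices of $C$ give ``precisely the structure of a minor of $W_m$,'' which is right, while your closing paragraph asserts $W_m$ ``permits at most one nontrivial attachment,'' which is not. The same misreading infects your backward direction: there is no ``cycle portion'' of $W_m$ to grow ``exactly as in the $Q_m$ argument,'' and there had better not be, since Lemma \ref{lem:C4 box C3 nonembedding in N1} makes $G \boxempty C_3$ nonprojective planar whenever $G$ contains a cycle of length at least four.

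A secondary, fixable gap is in your inductive embedding construction: unlike the $P_3$ case, where the new $P_3$ fiber has two edges and needs only the two quadrilateral faces flanking the subdivided middle edge, a new $C_3$ fiber has three edges, so your subdivision technique requires all \emph{three} quadrilateral faces of the relevant $P_2 \boxempty C_3$ prism to be faces of the embedding; this must be checked against Figure \ref{fig:N1 embeddings X,W} and re-verified at each inductive step. The paper's construction sidesteps this bookkeeping entirely: each outermost $C_3$ fiber bounds a triangular face, and one inserts a nested $3$-cycle inside that face joined by three edges to the bounding cycle, which again leaves the new innermost triangle bounding a triangular face, so the induction is self-sustaining.
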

\begin{proof}
    We cover the backwards direction with two constructive cases. 
    
    \underline{Case 1:} Let $G = X_{m}$, for some $m \in \mathbb{N}$. Consider the projective planar embedding of $X_1\boxempty C_3$ in Figure \ref{fig:N1 embeddings X,W}. The blue edges denote an $X_1$-fiber, while the orange edges represent four distinct $C_3$ fibers, each bounding a distinct triangular face in this embedding. We will inductively show how to build $X_m \boxempty C_3$ from the embedding  of $X_1\boxempty C_3$ given in Figure \ref{fig:N1 embeddings X,W}. Let $C_{T}$ represent the $3$-cycle that bounds triangle $T$ for one of the four orange triangles. Add an embedded $3$-cycle $C_{T}'$ to the interior of $T$. Embed three additional edges in $T$ so that each vertex of $C_{T}$ is incident to one and only one vertex of $C_{T}$. Clearly, this can be done for all four orange triangles in a manner that still gives a projective planar embedding, which will be an embedding for $X_2 \boxempty C_3$. To build a projective planar embedding of $X_3 \boxempty C_3$, one now works with the four triangles bound by the collection of four $C_{T}'$ cycles from the previous step and then repeats the same process.  
    
    \underline{Case 2:} Let $G = W_{m}$, for some $m \in \mathbb{N}$. Consider the projective planar embedding of $W_1\boxempty C_3$ in Figure \ref{fig:N1 embeddings X,W}. The blue edges denote a $W_1$-fiber, while the orange edges represent three distinct $C_3$ fibers, each bounding a distinct triangular face in this embedding. We use the same technique as in the $X_m$ case: in the interior of each of these three orange triangles, embed a $3$-cycle and embed a set of three additional edges connecting one vertex of this new $3$-cycle to one and only one vertex of the $3$-cycle that bounds this orange triangle. This results in a projective planar embedding of $W_{2} \boxempty C_{3}$ and one can clearly proceed inductively again. 

For the forward direction,  assume $G \boxempty C_3$ embeds in $N_1$. We will show that $G \boxempty C_3$ must be a minor of either $W_m$ or $X_{m}$ for some $m \in \mathbb{N}$. By Lemma \ref{lem: G psuedotree - cycles C3 case}, either $G$ is a tree or $G$ contains only one cycle which must be a $3$-cycle. If $G$ is a tree, then by Lemma \ref{lem:G is a tree P3,C3 case}, $G$ is a minor of $X_{m}$ for some $m \in \mathbb{N}$. So, suppose $G$ contains a single $3$-cycle, $C$, and no other cycles. First, if $G-C = \emptyset$, then $G$ is a $3$-cycle, which is a subgraph of $W_1$, and we are done. So assume $G-C \neq \emptyset$. Then $G - C$ must be a forest. Edge contract $C$ along its three edges, forming vertex $v_0$, to obtain a tree $G'$ that is a minor of $G$. Thus by Lemma \ref{lem:G is a tree P3,C3 case}, $G'$ is a minor of $X_m$.  This immediately tells us that each component of $G'-\{v_0\}$ is a path. Clearly $G'-\{v_0\}=G-C$, so $G-C$ must also be a disjoint collection of paths; let $\mathcal{P}$ designate this set of components of $G-C$. We now consider how elements of $\mathcal{P}$ connect to $C$ in order to form $G$. Suppose there exist two (distinct) elements in $\mathcal{P}$ that are connected at the same vertex of $C$, each by a distinct edge that was deleted from $G$ in forming $G-C$. If this happens, then $G$ clearly contains an $R$ minor, and so, $G \boxempty C_3$ does not embed in $N_1$ by Lemma \ref{lem: R box C3 nonembedding in N1}, giving a contradiction. Thus, there is at most one element of $\mathcal{P}$ attached to each vertex of $C$ via an edge in order to construct $G$. In this case, $G$ is a subgraph of $W_m$ for an appropriate choice of $m \in \mathbb{N}$.
\end{proof}



\section{Classification Part III: both factors have at least four vertices}
\label{sec:ClassificationIV}

We now classify when $G \boxempty H$ embeds in $N_1$ where $|V(G)|, |V(H)| \geq 4$. We first prove some essential facts about simple connected graphs with at least four vertices. Here, $C_{3} *_{v} P_{2}$ represents the amalgamation of the graphs $C_3$ and $P_{2}$ along a single vertex from each such graph. 

\begin{lemma}\label{lem:fourvertexminors}
Let $G$ be a graph with $|V(G)| \geq 4$. Then $G$ contains one of the following graphs as a minor: $P_4$, $C_4$, $K_{1,3}$, $C_{3} *_{v} P_{2}$, $K_{4} - e$, or $K_4$.
\end{lemma}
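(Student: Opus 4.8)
The plan is to reduce the statement to a purely finite observation: the six graphs listed are, up to isomorphism, exactly the connected graphs on four vertices. Sorting them by edge count, the connected four-vertex graphs are the two trees $P_4$ and $K_{1,3}$ (three edges), the two unicyclic graphs $C_4$ and $C_3 *_v P_2$ (four edges), the graph $K_4 - e$ (five edges), and $K_4$ (six edges), and this list is complete. Consequently it suffices to exhibit \emph{any} connected four-vertex graph as a minor of $G$, since whichever one appears is automatically on the list.

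To produce such a minor I would first pass to a spanning tree $T$ of $G$, which exists because $G$ is connected and which is a subgraph, hence a minor, of $G$. Since $|V(T)| = |V(G)| \geq 4$, the idea is to contract edges of $T$ one at a time until exactly four vertices remain. The key point is that contracting an edge of a tree again yields a tree: because a tree is triangle-free, the two endpoints of any tree edge have no common neighbor, so the contraction creates no multiple edge and the result is again simple, connected, and acyclic, with one fewer vertex. Iterating this $|V(T)| - 4$ times therefore yields a tree on exactly four vertices as a minor of $G$. A tree on four vertices has three edges and maximum degree either $2$ or $3$, so it is $P_4$ or $K_{1,3}$; both are on the list, which completes the argument.

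I expect no serious obstacle here: the argument is elementary and, in fact, only ever produces $P_4$ or $K_{1,3}$. The reason all six graphs are recorded in the statement is that the conclusion will be applied in Section \ref{sec:ClassificationIV} in the form ``$G$ has some connected four-vertex minor,'' so it is convenient to name every possibility rather than just the two the spanning-tree reduction delivers. The only steps genuinely requiring care — and the ones most prone to a slip — are the enumeration verifying that the six named graphs exhaust the connected four-vertex graphs, and the observation that edge contraction inside a tree never introduces a multiple edge; both are straightforward but should be stated explicitly, since the whole lemma rests on them.
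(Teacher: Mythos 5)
Your proof is correct, but it is not the paper's argument. The paper inducts on $|V(G)|$ inside $G$ itself: contract an arbitrary edge, delete any parallel edges this creates, and repeat until exactly four vertices remain; the resulting simple connected four-vertex graph is then identified as one of the six listed graphs via the same enumeration you describe. You instead pass to a spanning tree first, so every contraction happens in a triangle-free graph, no parallel edges ever arise, and the four-vertex minor you reach is always a tree, namely $P_4$ or $K_{1,3}$. This buys two things: the parallel-edge bookkeeping disappears, and you actually prove the sharper statement that every connected graph on at least four vertices has a $P_4$ or $K_{1,3}$ minor --- which is precisely the form in which the lemma is invoked in Case 3 of the proof of Theorem \ref{thm:N1 embedding 4 Vertex Case}. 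What your route gives up is that it makes the enumeration of connected four-vertex graphs logically superfluous for the lemma, whereas the paper's proof establishes that enumeration en route and then leans on it elsewhere: in the $|V(G)|=|V(H)|=4$ case of Theorem \ref{thm:N1 embedding 4 Vertex Case}, the step ``neither factor has a $K_{1,3}$ minor, hence each is $P_4$ or $C_4$'' needs to know that the six graphs exhaust all connected four-vertex graphs. So if your proof were adopted, that enumeration would still have to be recorded somewhere (as you note). One small polish: ``the result is acyclic'' deserves a half-sentence of justification --- after contracting a tree edge the graph is connected with $n-1$ vertices and $n-2$ edges (no edges are merged, by triangle-freeness), hence it is again a tree.
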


\begin{proof}
  First, suppose $G$ has exactly four vertices. Since $G$ is a simple connected graph, $G$ must be a minor of $K_4$. One can check that the simple connected minors of $K_4$ with exactly four vertices are exactly the six graphs on our list. If $G$ has $|V(G)| = n > 4$, then one can contract along some edge of $G$ and then delete any parallel edges created from this edge contraction to obtain a simple connected graph $G'$ where $|V(G')| = n-1$ and $G'$ is a minor of $G$. By inducting on $n$, one can obtain a simple connected minor of $G$ with exactly four vertices, and so, this minor must be one of the six graphs on our list, as needed.     
\end{proof} 

\begin{lemma}
    \label{lem:P4 box K1,3 nonembedding in N1}
    $P_4 \boxempty K_{1,3}$ does not embed on $N_1$.
\end{lemma}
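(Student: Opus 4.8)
The plan is to exhibit an explicit forbidden minor for the projective plane inside $P_4 \boxempty K_{1,3}$, mirroring the strategy used throughout the preceding lemmas (Lemmas \ref{lem:I box P3 nonembeding in N1}, \ref{lem:K1,5 box P3 nonembedding in N1}, \ref{lem:S box P3 nonembeding in N1}, \ref{lem:Demise of Evil Lollipop}, and \ref{lem: R box C3 nonembedding in N1}). That is, I would set up a coordinate labeling on the vertices, where the first coordinate records position in a $P_4$-fiber and the second records position in a $K_{1,3}$-fiber, and then describe a sequence of edge deletions and edge contractions that reduces $P_4 \boxempty K_{1,3}$ to one of the $35$ graphs of $\Forb(N_1)$. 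Since $P_4 \boxempty K_{1,3}$ has $4 \times 4 = 16$ vertices and is fairly dense, the natural candidate targets are the members of $\Forb(N_1)$ that have already appeared in this paper, namely $\mathcal{G}_1$, $\mathcal{E}_3 = K_{3,5}$, $\mathcal{E}_5$, $\mathcal{E}_{22}$, or $\mathcal{D}_{17} = K_4 \boxempty P_2$; I would first check which of these has the right vertex count and degree sequence to be realizable here.

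First I would fix notation: label the central vertex of $K_{1,3}$ as $0$ and its three leaves as $1,2,3$, and label the four vertices of $P_4$ as $1,2,3,4$ in path order. The $K_{1,3}$-fibers are the four stars $\{a\} \times K_{1,3}$ for $a \in \{1,2,3,4\}$, and the $P_4$-fibers are the four paths $P_4 \times \{b\}$ for $b \in \{0,1,2,3\}$. The key structural observation to exploit is that the ``central'' $P_4$-fiber $P_4 \times \{0\}$ carries the four hub vertices, each of which has degree $5$ in the product (degree $3$ within its $K_{1,3}$-fiber plus degree $1$ or $2$ along the central path). This high-degree spine is what should let me contract the three outer $P_4$-fibers down to create a bipartite-like or amalgamated structure matching a forbidden minor. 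Concretely, I would try contracting each outer $P_4$-fiber $P_4 \times \{b\}$, $b \in \{1,2,3\}$, toward a single vertex $w_b$, and then delete a carefully chosen set of edges along the central spine to expose the target minor.

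The main obstacle I anticipate is not the existence of some forbidden minor — $P_4 \boxempty K_{1,3}$ is large enough that it surely contains several — but rather writing down a clean, verifiable contraction/deletion sequence whose output is unambiguously isomorphic to a named graph in $\Forb(N_1)$. The bookkeeping for which edges survive after each contraction (and which parallel edges must be suppressed to keep the graph simple) is where errors creep in, exactly as the detailed labeling in Lemmas \ref{lem:S box P3 nonembeding in N1} and \ref{lem: R box C3 nonembedding in N1} was needed to guard against. I would therefore carry out the reduction in a few clearly delineated stages — first delete a small set of spine edges to break unwanted adjacencies, then contract the outer fibers, then make a final round of deletions — and after each stage record the resulting degree sequence as a sanity check against the target graph's degree sequence.

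A secondary concern is choosing the \emph{right} target so that the argument is as short as possible and reuses a graph already introduced in the paper. If a direct reduction to $\mathcal{E}_{22}$ (which already arose in Lemmas \ref{lem:S box P3 nonembeding in N1} and \ref{lem:C4 box C3 nonembedding in N1}) or to $K_{3,5} = \mathcal{E}_3$ proves awkward, the fallback is to show $P_4 \boxempty K_{1,3}$ contains as a minor one of the smaller products already proven nonprojective-planar; for instance, if a $K_{1,5}$ or an $I$ minor can be contracted into one factor while retaining enough of the other factor, one could invoke Lemma \ref{lem:K1,5 box P3 nonembedding in N1} or Lemma \ref{lem:I box P3 nonembeding in N1} indirectly. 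However, since those earlier lemmas are stated for products with $P_3$ rather than $K_{1,3}$, the cleanest route is almost certainly the direct exhibition of a single $\Forb(N_1)$ minor, and I expect $\mathcal{E}_{22}$ or $K_{3,5}$ to be the most economical choice given the degree-$5$ spine described above.
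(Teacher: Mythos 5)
Your overall strategy (exhibit an explicit member of $\Forb(N_1)$ as a minor) is the same as the paper's, and $\mathcal{G}_1$ even appears in your candidate list; but the one concrete reduction you commit to provably cannot work, and this is a genuine gap rather than a bookkeeping issue. Write $h_a = (a,0)$ for the four hub vertices and $(a,j)$, $j \in \{1,2,3\}$, for the leaves. Contracting each outer $P_4$-fiber $P_4 \times \{j\}$ to a single vertex $w_j$ produces the $7$-vertex graph $M$ consisting of the spine path $h_1h_2h_3h_4$ together with $w_1,w_2,w_3$ each adjacent to all four $h_a$, i.e.\ $K_{3,4}$ plus a Hamiltonian path on its $4$-side; deleting spine edges afterwards only yields subgraphs of $M$. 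But $M$ \emph{is} projective planar: $K_{3,4}$ quadrangulates $N_1$ with the six quadrilateral faces $w_1h_1w_2h_2$, $w_1h_3w_2h_4$, $w_2h_1w_3h_3$, $w_2h_2w_3h_4$, $w_3h_1w_1h_4$, $w_3h_2w_1h_3$ (every edge lies on exactly two of these, the link of every vertex is a single cycle, and $7-12+6=1$), and these faces realize all six pairs $\{h_a,h_b\}$ as opposite corners, so the three spine edges $h_1h_2$, $h_2h_3$, $h_3h_4$ can be drawn as diagonals inside three distinct faces. Hence after your contractions no forbidden minor can survive; the full contraction of the outer fibers is exactly the step that destroys the obstruction. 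Your fallback fails for a more elementary reason: $I \boxempty P_3$ and $K_{1,5}\boxempty P_3$ have $24$ and $18$ vertices respectively, so neither can be a minor of the $16$-vertex graph $P_4 \boxempty K_{1,3}$.

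The repair, which is what the paper does, is to contract each outer fiber down to \emph{two} vertices rather than one, keeping the two halves of the product separate. Delete the three hub--hub (spine) edges; then contract only the six edges $[(1,j),(2,j)]$ and $[(4,j),(3,j)]$ for $j=1,2,3$. The hubs $h_1,h_2$ together with the merged vertices $u_j = \{(1,j),(2,j)\}$ form a $K_{2,3}$, as do $h_3,h_4$ with $v_j = \{(3,j),(4,j)\}$, and the surviving fiber edges $[(2,j),(3,j)]$ become three edges joining $u_j$ to $v_j$; this is precisely $\mathcal{G}_1$, two disjoint copies of $K_{2,3}$ joined by three edges. Note that your guiding heuristic (``the degree-$5$ spine suggests $\mathcal{E}_{22}$ or $K_{3,5}$'') points away from the right structure: what makes $P_4 \boxempty K_{1,3}$ nonprojective planar is that it splits into two $P_2 \boxempty K_{1,3}$ halves, each contracting to a $K_{2,3}$, and it is far from clear that $K_{3,5}$ or $\mathcal{E}_{22}$ occur as minors of this graph at all.
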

\begin{proof}
    We claim $P_{4} \boxempty K_{1,3}$ contains a $\mathcal{G}_1$ forbidden minor for the projective plane. We denote the $K_{1,3}$ fibers adjacent to only one other copy of $K_{1,3}$ as ``external" whereas ``internal" $K_{1,3}$ fibers are adjacent to two distinct $K_{1,3}$ fibers. First, delete the three edges between vertices of degree four or five in $P_4 \boxempty K_{1,3}$ to obtain a minor $G'$. Edge contract along all six of the edges of $G'$ that connect a degree two vertex of an external $K_{1,3}$ fiber to a degree three vertex on an internal fiber. The resulting minor of  $P_4 \boxempty K_{1,3}$ is $\mathcal{G}_1$.
\end{proof}


\begin{lemma}
    \label{lem:K1,4 box K1,3 doesn't embed}
    $K_{1,4} \boxempty K_{1,3}$ does not embed on $N_1$.
\end{lemma}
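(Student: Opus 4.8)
The plan is to exhibit a $K_{3,5} = \mathcal{E}_3$ forbidden minor inside $K_{1,4} \boxempty K_{1,3}$, mirroring the strategy of Lemma \ref{lem:K1,5 box P3 nonembedding in N1}. First I would fix a coordinate system: let $c$ denote the center of $K_{1,4}$ with leaves $a_1, a_2, a_3, a_4$, and let $d$ denote the center of $K_{1,3}$ with leaves $b_1, b_2, b_3$, so that each vertex of the product is written $(x,y)$ with the first coordinate recording the $K_{1,4}$-fiber and the second recording the $K_{1,3}$-fiber. The $20$ vertices split into the central $K_{1,3}$-fiber $\{c\} \times K_{1,3}$ and the four ``arm'' fibers $\{a_i\} \times K_{1,3}$.

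The central idea is to contract the four arm fibers. For each $i \in \{1,2,3,4\}$, I would edge contract all three edges of the fiber $\{a_i\} \times K_{1,3}$ (a star centered at $(a_i,d)$) down to a single vertex $v_i$. The key observation is that the external edges leaving this fiber are exactly the $K_{1,4}$-fiber edges $(a_i,d)\,$--$\,(c,d)$ and $(a_i,b_j)\,$--$\,(c,b_j)$ for $j=1,2,3$; after contraction these become the four edges joining $v_i$ to each of $(c,d)$, $(c,b_1)$, $(c,b_2)$, and $(c,b_3)$, with no parallel edges created. Thus every $v_i$ is adjacent to all four vertices of the surviving central column. Together with the star edges $(c,d)\,$--$\,(c,b_j)$ of the central fiber, this shows that the three vertices $(c,b_1),(c,b_2),(c,b_3)$ are each adjacent to all five of $v_1,v_2,v_3,v_4,(c,d)$.

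To finish, I would delete the four edges $v_i\,$--$\,(c,d)$ (which lie within the prospective five-vertex part and play no role in the bipartite structure). The remaining graph on the eight vertices $\{v_1,v_2,v_3,v_4,(c,d)\}$ and $\{(c,b_1),(c,b_2),(c,b_3)\}$ is precisely $K_{3,5}$. Since $K_{3,5} = \mathcal{E}_3 \in \Forb(N_1)$ and $K_{3,5}$ is obtained from $K_{1,4} \boxempty K_{1,3}$ by edge contractions and edge deletions, it is a minor, so $K_{1,4} \boxempty K_{1,3}$ does not embed on $N_1$.

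I expect the only real point requiring care, rather than genuine difficulty, is the bookkeeping in the contraction step: one must confirm that contracting each arm fiber sends $v_i$ to exactly the four distinct central-column vertices (so that the five-versus-three bipartition is complete and the $20$ vertices reduce cleanly to the expected $8$), and that no unintended identifications or multi-edges arise. Once the adjacencies after contraction are verified, identifying the parts of $K_{3,5}$ and deleting the four stray edges is routine.
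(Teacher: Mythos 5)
Your proof is correct, but it arrives at a different forbidden minor than the paper does. The paper also contracts the four arm $K_{1,3}$-fibers to single vertices $v_1,\dots,v_4$, but it first \emph{deletes} the three edges of the central fiber and \emph{keeps} the four edges $v_i$--$(c,d)$, so that the eight surviving vertices form $K_{4,4}$ with parts $\{v_1,v_2,v_3,v_4\}$ and $\{(c,d),(c,b_1),(c,b_2),(c,b_3)\}$; it then deletes one edge to obtain $K_{4,4}-e=\mathcal{E}_{18}\in\Forb(N_1)$. You make the complementary choice: keep the central star edges $(c,d)$--$(c,b_j)$ and delete the four edges $v_i$--$(c,d)$, which moves $(c,d)$ into the five-vertex side and yields $K_{3,5}=\mathcal{E}_3$. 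Your bookkeeping is right: each arm fiber's external edges go to the four distinct central vertices, so no parallel edges or unintended identifications arise, and the resulting 8-vertex graph is exactly $K_{3,5}$ with no edges inside either part. One concrete advantage of your route: $\mathcal{E}_3$ is already required by Lemma \ref{lem:K1,5 box P3 nonembedding in N1}, whereas $\mathcal{E}_{18}$ is used nowhere else in the paper, so substituting your argument would shrink the sufficient set in Corollary \ref{cor:MMNPP} from six forbidden minors to five by removing $\mathcal{E}_{18}$. The paper's version is not simpler than yours (both use one round of contractions plus a handful of deletions), so the choice between them is essentially a choice of which element of $\Forb(N_1)$ to certify.
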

\begin{proof}
 We claim $K_{1,4} \boxempty K_{1,3}$ contains an $E_{18} = K_{4,4} - \{e\}$ forbidden minor for the projective plane. Let $v_{0}$ be the unique degree seven vertex of $K_{1,4} \boxempty K_{1,3}$. We call the $K_{1,3}$ fiber that contains $v_{0}$ the central $K_{1,3}$ fiber. To obtain our desired minor,  first delete all three edges in the central $K_{1,3}$ fiber. Then we edge contract each of the other $K_{1,3}$ fibers so that each becomes a single vertex. This produces a $K_{4,4}$ minor. We then delete one edge to obtain a $K_{4,4} - \{e\} = E_{18}$ minor, as desired.   
\end{proof}

\begin{lemma}\label{lem:G has K1,4 or P4 subgraph}
Suppose $G$ is not a path or cycle and $|V(G)| \geq 5$. Then $G$ contains either a $P_4$ subgraph or a $K_{1,4}$ subgraph.
\end{lemma}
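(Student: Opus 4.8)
The plan is to organize the proof around the maximum degree of $G$, using the standard fact that a connected graph in which every vertex has degree at most $2$ must be a path or a cycle. Since $G$ is connected (as assumed throughout the paper) and is by hypothesis neither a path nor a cycle, this fact forces the existence of a vertex $v$ with $\deg(v) \geq 3$. I would record this as the first step, since it is exactly where the ``not a path or cycle'' hypothesis is consumed.

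If $\deg(v) \geq 4$, then $v$ together with any four of its neighbors spans a $K_{1,4}$ subgraph and the lemma holds immediately. Thus the only substantive case is $\deg(v) = 3$; write $a, b, c$ for the three neighbors of $v$, so that the neighborhood of $v$ is exactly $\{a,b,c\}$. The key step is then to exploit $|V(G)| \geq 5$ to extend one of the short paths through $v$ into a $P_4$. Since $|V(G)| \geq 5$, the set $V(G) \setminus \{v,a,b,c\}$ is nonempty, and connectivity of $G$ forces an edge across the cut separating $\{v,a,b,c\}$ from its complement. Because the only neighbors of $v$ are $a, b, c$, this crossing edge cannot be incident to $v$; hence it joins some $w \notin \{v,a,b,c\}$ to one of $a, b, c$, say $a$ after relabeling. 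Then $w - a - v - b$ is a path on four distinct vertices, i.e.\ a $P_4$ subgraph, which completes the argument.

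I do not anticipate a serious obstacle: the lemma is essentially a degree count, and the only point needing care is verifying in the degree-$3$ case that the four vertices $w, a, v, b$ are genuinely distinct, which is immediate since $w$ lies outside $\{v,a,b,c\}$ while $a, b, v$ are distinct by construction. (An alternative route would be to prove the structure theorem that a connected graph with no $P_4$ subgraph is a star or a triangle, and then note that a star on $\geq 5$ vertices already contains $K_{1,4}$; but the direct degree argument above is shorter and matches the subgraph-based style of the surrounding lemmas.) It is worth remarking that the bound $|V(G)| \geq 5$ cannot be weakened: the graph $K_{1,3}$ is connected, is neither a path nor a cycle, has four vertices, and contains neither a $P_4$ nor a $K_{1,4}$ subgraph, which is precisely the configuration ruled out once a fifth vertex is available.
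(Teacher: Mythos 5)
Your proof is correct and takes essentially the same approach as the paper: both arguments locate a vertex $v$ of degree at least $3$, use $|V(G)| \geq 5$ together with connectivity to produce an edge from a fifth vertex into the star around $v$, and conclude with either a $P_4$ or a $K_{1,4}$. The only difference is organizational---you split on $\deg(v) \geq 4$ versus $\deg(v) = 3$ at the outset, while the paper fixes a $K_{1,3}$ around $v$ and splits on whether the outside vertex attaches to the center (giving $K_{1,4}$) or to a leaf (giving $P_4$).
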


\begin{proof}
    Let G be a graph meeting these qualifications. Since $G$ is not a path or cycle, there exists $v_0 \in V(G)$ such that $\deg(v_0) \ge 3$ where $v_0$ is connected by distinct edges to three other distinct vertices, say $v_1, v_2, v_3$. Let $H$ be the subgraph of $G$ consisting of these four vertices and three edges. Note, $H$ is isomorphic to $K_{1,3}$. Since $|V(G)| \geq 5$, the set $V(G) - V(H)$ is nonempty. Since $G$ is connected, some vertex in  $V(G) - V(H)$, say $v_4$, must connect to a vertex of $H$ via a path in $G$. If $v_4$ connects to $v_i$ for $i=1,2,3$, then $G$ contains a $P_4$ subgraph. Otherwise, $v_4$ must connect to $v_0$, and so, $G$ contains a $K_{1,4}$ subgraph.
\end{proof}

There are some  useful subgraph inclusions from the set of six graphs in Lemma \ref{lem:fourvertexminors} which will be used in the proof of the following theorem. We use $H \leq G$ to denote $H$ is a subgraph of $G$. Specifically, we have 
\begin{enumerate}
    \item $P_4 \leq C_4 \leq K_4 - e \leq K_4$
    \item $P_4 \leq C_{3} *_{v} P_{2} \leq K_{4} - e \leq K_{4}$
    \item $K_{1,3} \leq C_{3} *_{v} P_{2} \leq K_{4} - e \leq K_{4}$
\end{enumerate}

In addition, any of these six graphs contains either a $P_4$ subgraph or a $K_{1,3}$ subgraph. 

\begin{thm}\label{thm:N1 embedding 4 Vertex Case}
     Suppose $G$ and $H$ are graphs with  $|V(G)|, |V(H)| \geq 4$. Then $G \boxempty H$ is projective planar if and only if $G \boxempty H$ is planar or $K_{1,3} \boxempty K_{1,3}$. 
\end{thm}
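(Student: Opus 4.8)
The plan is to prove both implications, with the forward (necessity) direction doing essentially all of the work. The backward direction is immediate in two pieces: if $G \boxempty H$ is planar then it embeds in the plane and hence in $N_1$, and for $K_{1,3} \boxempty K_{1,3}$ I would exhibit an explicit crossing-free drawing on the antipodal-identification disk model of $N_1$ (to be recorded in a figure). These two pieces are genuinely distinct, since $K_{1,3} \boxempty K_{1,3}$ is not of the form $P_m \boxempty P_n$, $P_m \boxempty C_n$, or $G \boxempty P_2$ with $G$ outerplanar, and so is nonplanar by the classification of planar Cartesian products \cite{BeMa1969}.

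The engine for the forward direction is that the Cartesian product is monotone under the minor order in each factor: if $G' \preceq G$ and $H' \preceq H$ then $G' \boxempty H' \preceq G \boxempty H$. Since minors of projective-planar graphs are projective planar, Lemmas \ref{lem:P4 box K1,3 nonembedding in N1}, \ref{lem:K1,4 box K1,3 doesn't embed}, and \ref{lem:C4 box C3 nonembedding in N1} then forbid $P_4 \boxempty K_{1,3}$, $K_{1,4} \boxempty K_{1,3}$, and $C_4 \boxempty C_3$ from occurring as minors of any projective-planar $G \boxempty H$. I would organize the argument around the dichotomy that a connected graph has no $K_{1,3}$ minor exactly when it has maximum degree at most $2$, i.e. exactly when it is a path or a cycle; otherwise it has a degree-$\geq 3$ vertex and hence a $K_{1,3}$ subgraph. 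Also, any path or cycle on at least four vertices has a $P_4$ minor. This splits the problem into three cases. If both factors are paths or cycles, the product is $P \boxempty P$, $P \boxempty C$, or $C \boxempty C$; the first two are planar, while $C_m \boxempty C_n$ (with $m,n \geq 4$) contains $C_4 \boxempty C_3$ as a minor and so is nonprojective planar by Lemma \ref{lem:C4 box C3 nonembedding in N1} --- hence projective planarity forces planarity here. If exactly one factor is a path or cycle, then one factor supplies a $P_4$ minor and the other a $K_{1,3}$ minor, giving the forbidden $P_4 \boxempty K_{1,3}$ minor, so this mixed case yields no projective-planar products.

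The main case is when neither factor is a path or cycle, so both carry a $K_{1,3}$ minor; here I would argue each factor must actually equal $K_{1,3}$. If a factor $G$ has at least five vertices, Lemma \ref{lem:G has K1,4 or P4 subgraph} produces a $P_4$ or $K_{1,4}$ subgraph of $G$, which combined with the $K_{1,3}$ minor of the other factor yields a $P_4 \boxempty K_{1,3}$ or $K_{1,4} \boxempty K_{1,3}$ minor, contradicting Lemmas \ref{lem:P4 box K1,3 nonembedding in N1} and \ref{lem:K1,4 box K1,3 doesn't embed}. If instead $G$ has exactly four vertices, then by Lemma \ref{lem:fourvertexminors} together with the listed subgraph inclusions $G$ is one of $K_{1,3}$, $C_{3} *_{v} P_{2}$, $K_4 - e$, or $K_4$, and each option except $K_{1,3}$ contains a $P_4$ subgraph, again forcing the forbidden $P_4 \boxempty K_{1,3}$ minor. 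Thus both factors equal $K_{1,3}$ and $G \boxempty H = K_{1,3} \boxempty K_{1,3}$, completing the necessity direction.

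I expect the main obstacle to lie in the backward direction: producing and verifying a correct crossing-free embedding of $K_{1,3} \boxempty K_{1,3}$ in $N_1$ is the sole place that requires a genuine geometric construction rather than a minor-transfer argument. The forward-direction case analysis is conceptually routine once the nonembedding lemmas and the structural Lemmas \ref{lem:fourvertexminors} and \ref{lem:G has K1,4 or P4 subgraph} are in hand, the only care being to confirm that every non-$K_{1,3}$ alternative in the final case genuinely exposes a $P_4$ or $K_{1,4}$ subgraph sitting alongside a $K_{1,3}$ minor in the other factor.
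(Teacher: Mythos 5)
Your proposal is correct and follows essentially the same route as the paper's proof: both rest on the nonembedding Lemmas \ref{lem:P4 box K1,3 nonembedding in N1}, \ref{lem:K1,4 box K1,3 doesn't embed}, and \ref{lem:C4 box C3 nonembedding in N1} transferred through minor-monotonicity of $\boxempty$, together with the structural Lemmas \ref{lem:fourvertexminors} and \ref{lem:G has K1,4 or P4 subgraph} and the path/cycle versus $K_{1,3}$ dichotomy. The only difference is cosmetic --- you split cases by how many factors are paths or cycles, while the paper splits first on whether both factors have exactly four vertices or one has at least five --- and the resulting case analyses use the same ingredients in the same way.
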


\begin{proof}
    Let $G$ and $H$ be graphs with $|V(G)|, |V(H)| \ge 4$. We first proceed in the backwards direction.  If $G \boxempty H$ is planar, then $G \boxempty H$ is also projective planar. We provide a construction of $G \boxempty H = K_{1,3} \boxempty K_{1,3}$ in Figure \ref{fig:N1 embedding K1,3 box K1,3}.This completes the backwards direction. 
    \begin{figure}
    \centering
    \begin{overpic}
        [scale=0.18]{Crab.jpeg}
			\put(12,50){\tiny{$(2,1)$}}
            \put(23,50){\tiny{$(2,2)$}}
            \put(23,37){\tiny{$(2,3)$}}
            \put(18,29){\tiny{$(2,4)$}}
            \put(34,58){\tiny{$(3,2)$}}
            \put(53,58){\tiny{$(3,1)$}}
            \put(48,68){\tiny{$(3,3)$}}
            \put(40,75){\tiny{$(3,4)$}}
            \put(34.5,46){\tiny{$(1,2)$}}
            \put(53.5,46){\tiny{$(1,1)$}}
            \put(47,35){\tiny{$(1,3)$}}
            \put(39,23){\tiny{$(1,4)$}}
            \put(64,50){\tiny{$(4,1)$}}
            \put(75,50){\tiny{$(4,2)$}}
            \put(73.5,40){\tiny{$(4,3)$}}
            \put(72,32){\tiny{$(4,4)$}}
            \put(-3,45){\textbf{b}}
            \put(98,45){\textbf{b'}}          
    \end{overpic}
    \caption{Projective Plane Embedding of $K_{1,3} \boxempty K_{1,3}.$}
        \label{fig:N1 embedding K1,3 box K1,3}
    \end{figure}

Now, assume  $G\boxempty H$ is projective planar. We will show $G \boxempty H$ must be either planar or $K_{1,3} \boxempty K_{1,3}$. We consider two cases: either $|V(G)|, |V(H)|=4$ or, without loss of generality, $|V(G)|\ge 5$.

  First, suppose $|V(G)|, |V(H)| = 4$.  Note that, $ P_4 \boxempty K_{1,3}$ is nonprojective planar by Lemma \ref{lem:P4 box K1,3 nonembedding in N1}. Thus, by considering (1) and (2) from our subgraph inclusion lists, we have that $ G \boxempty K_{1,3}$ is projective planar only when $G= K_{1,3}$. We now consider when neither $G$ nor $H$ has a $K_{1,3}$ minor. Then $G$ and $H$ must be either $P_4$ or $C_4$. Both $P_4 \boxempty P_4$ and $P_4 \boxempty C_4$ are planar. By Lemma \ref{lem:C4 box C3 nonembedding in N1}, we know that $C_4 \boxempty C_4$ is nonprojective planar. Thus if $|V(G)|, |V(H)|=4$, $G \boxempty H$ is planar or $K_{1,3} \boxempty K_{1,3}$, as needed.

Now, suppose $|V(G)|\ge 5$ and $|V(H)|\ge 4$. We consider the following sub-cases: G is a path, G is a cycle,  or G is neither a path nor a cycle.\\
    \indent \underline{Case 1:} Suppose G is a path. Since $|V(G)| \geq 5$, we see that $G$ contains a $P_4$ subgraph. From Lemma \ref{lem:P4 box K1,3 nonembedding in N1}, we have that H does not contain a $K_{1,3}$ minor. Therefore, $H$ is either $P_n$ or $C_n$, and in either case,  $G \boxempty H$ is planar.\\
    \indent \underline{Case 2:} Suppose $G$ is a cycle. Since $|V(G)| \geq 5$, we see that $G$ must contain a $P_4$ minor. Similar to Case 1, we see that H cannot contain a $K_{1,3}$ minor, and so, H must be a path or cycle. If H is a cycle, then $G \boxempty H$ contains a $C_4 \boxempty C_3$ minor. Then by Lemma \ref{lem:C4 box C3 nonembedding in N1}, $G \boxempty H$ is nonprojective planar. Thus, H must be a path, which makes $G \boxempty H$  planar.\\
    \indent \underline{Case 3:} Suppose $G$ is neither a path nor a cycle. 
    Then by Lemma \ref{lem:G has K1,4 or P4 subgraph}, we have that $G$ contains a $P_4$ or $K_{1,4}$ subgraph. In addition, since $G$ is neither a path nor a cycle, there must exist $v \in V(G)$ such that $\deg(v) \geq 3$ which implies $G$ contains a $K_{1,3}$ subgraph. By our subgraph inclusion lists for graphs with four vertices, $H$ must contain a  $P_4$ or $K_{1,3}$ minor. For any of these possible cases, $G \boxempty H$ contains either a $P_4 \boxempty K_{1,3}$ minor or a $K_{1,4}  \boxempty K_{1,3}$ minor. By Lemma \ref{lem:P4 box K1,3 nonembedding in N1} and Lemma \ref{lem:K1,4 box K1,3 doesn't embed}, we see that  $G \boxempty H$ is nonprojective planar in these cases. 
\end{proof}


\section{Forbidden minimal minor sets for Cartesian products}

\label{sec:Forbiddensets}

At this point, we can justify Theorem \ref{thm:maintheorem},  which we restate here.
\begin{named}{Theorem \ref{thm:maintheorem}}
The Cartesian product $G \boxempty H$ is projective planar if and only if either 
   \begin{enumerate}
    \item $G \boxempty H$ is planar,
    \item $G \boxempty H = K_{1,3} \boxempty K_{1,3}$,
    \item $G \boxempty H = G \boxempty P_3$ where $G$ is a minor of $B_{m}$ or $Q_{m}$ for some $m \in \mathbb{N}$, or
    \item $G \boxempty H = G \boxempty C_{3}$ where $G$ is a minor of $W_{m}$ or $X_{m}$ for some $m \in \mathbb{N}$. 
   \end{enumerate}
\end{named}

\begin{proof}
    The backwards direction follows from explicit constructions given throughout this paper. See Figure \ref{fig:N1 embeddings B,Q}, Figure \ref{fig:N1 embeddings X,W}, and Figure \ref{fig:N1 embedding K1,3 box K1,3} and their corresponding descriptions.

    For the forwards direction, suppose $G \boxempty H$ is projective planar. Theorem \ref{thm:N1 embedding P2 Case} shows that if $H$ has exactly two vertices, then $G \boxempty H$ is planar. Theorem \ref{thm:N1 embedding P3 Case} and Theorem \ref{thm:N1 embedding C3 Case} cover the cases where $H$ has exactly three vertices, which are classified by the graphs described in items (3) and (4). By commutativity of Cartesian products, it remains to cover the case where both $G$ and $H$ have at least four vertices. In this case,  Theorem \ref{thm:N1 embedding 4 Vertex Case} shows that $G \boxempty H$ is either planar or $K_{1,3} \boxempty K_{1,3}$.
\end{proof}

As noted in Section \ref{sec:Intro}, the work of Robertson-Seymour shows that any property closed under taking minors exhibits a finite list of  forbidden minimal minors. Furthermore, the work of Archdeacon \cite{Ar1980} shows that one only needs to consider whether a graph $G$ contains one of the $35$ elements in $Forb(N_1)$ as a minor to determine if $G$ is projective planar. Our work shows that if we restrict our analysis to  Cartesian products of graphs, then it is sufficient to consider only six elements of $Forb(N_1)$; see  Appendix \ref{sec:Appendix} for visuals of these six graphs.

\begin{cor}
    \label{cor:MMNPP}
The set $\{ \mathcal{D}_{17}, \mathcal{E}_{3}, \mathcal{E}_{5}, \mathcal{E}_{18}, \mathcal{E}_{22}, \mathcal{G}_{1}  \} \subset \Forb(N_1)$ is a sufficient set of forbidden minors to classify projective planar Cartesian products.
\end{cor}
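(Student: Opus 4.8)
The plan is to read the corollary directly off the forward directions of the four classification theorems already established, together with the fact that the Cartesian product is monotone under minors in each factor. So I would first record the observation that if $G'$ is a minor of $G$, then $G' \boxempty H$ is a minor of $G \boxempty H$: each vertex deletion, edge deletion, and edge contraction performed on $G$ lifts fiberwise to the corresponding operation on $G \boxempty H$. This is what lets me transfer the ``forbidden minors for $G$'' appearing in the earlier proofs (namely $K_4$, $K_{2,3}$, $I$, $K_{1,5}$, $S$, $L$, $R$, $C_4$) into minors of $G \boxempty H$ of the form $G' \boxempty H$, and from there into one of the six listed graphs.

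One inclusion is immediate: the six graphs lie in $\Forb(N_1)$, so any Cartesian product containing one of them as a minor is nonprojective planar. The content is the converse, that every nonprojective planar Cartesian product contains at least one of the six as a minor. I would prove this by splitting on the size of the smaller factor exactly as in the proof of Theorem \ref{thm:maintheorem} (using commutativity of $\boxempty$ to assume $|V(H)| \le |V(G)|$), and in each case pointing to the specific element of $\Forb(N_1)$ that the previous arguments already produce.

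The bulk of the work is a short trace of each obstruction to one of the six graphs. When $H = P_2$, Theorem \ref{thm:N1 embedding P2 Case} shows that nonprojective planarity forces a $K_4$ or $K_{2,3}$ minor in $G$; then $K_4 \boxempty P_2 = \mathcal{D}_{17}$, while $K_{2,3} \boxempty P_2$ contains $\mathcal{G}_1$. When $H = P_3$, the forward direction of Theorem \ref{thm:N1 embedding P3 Case} forces one of $K_4$, $K_{2,3}$, $I$, $K_{1,5}$, $S$, $L$ as a minor of $G$; the first two reduce, by taking two consecutive $P_3$-fibers to expose a $\boxempty P_2$ subgraph, to $\mathcal{D}_{17}$ and $\mathcal{G}_1$, while Lemmas \ref{lem:I box P3 nonembeding in N1}, \ref{lem:K1,5 box P3 nonembedding in N1}, \ref{lem:S box P3 nonembeding in N1}, and \ref{lem:Demise of Evil Lollipop} supply $\mathcal{G}_1$, $\mathcal{E}_3$, $\mathcal{E}_{22}$, $\mathcal{E}_5$ respectively. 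When $H = C_3$, the forward direction of Theorem \ref{thm:N1 embedding C3 Case} forces an $R$ or $C_4$ minor in $G$ (a cycle $C_n$ with $n \ge 4$ or two $3$-cycles sharing an edge both contract to $C_4$), yielding $\mathcal{G}_1$ by Lemma \ref{lem: R box C3 nonembedding in N1} or $\mathcal{E}_{22}$ by Lemma \ref{lem:C4 box C3 nonembedding in N1}. Finally, when both factors have at least four vertices, Theorem \ref{thm:N1 embedding 4 Vertex Case} produces a $P_4 \boxempty K_{1,3}$, $K_{1,4} \boxempty K_{1,3}$, or $C_4 \boxempty C_3$ minor (the last via a $C_4 \boxempty C_4$ minor), giving $\mathcal{G}_1$, $\mathcal{E}_{18}$, $\mathcal{E}_{22}$ by Lemmas \ref{lem:P4 box K1,3 nonembedding in N1}, \ref{lem:K1,4 box K1,3 doesn't embed}, and \ref{lem:C4 box C3 nonembedding in N1}. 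Collecting the $\Forb(N_1)$ graphs that appear across all four cases yields exactly $\{\mathcal{D}_{17}, \mathcal{E}_3, \mathcal{E}_5, \mathcal{E}_{18}, \mathcal{E}_{22}, \mathcal{G}_1\}$.

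I expect the only genuine subtlety to be the bookkeeping: confirming that the listed six really do exhaust every element of $\Forb(N_1)$ invoked anywhere in the forward directions, and that each forbidden minor for $G$ truly propagates to one of the six through minor-monotonicity together with the consecutive-fiber subgraph inclusions such as $K_4 \boxempty P_2 \le K_4 \boxempty P_3$. No new embedding or minor computation is needed beyond those already carried out, so the corollary is a matter of assembling the earlier results rather than proving anything new.
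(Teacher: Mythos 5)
Your overall route is the paper's own: the paper offers no separate argument for this corollary, treating it as read off from the classification, since the only elements of $\Forb(N_1)$ ever invoked to rule out a Cartesian product are the six listed; your plan of tracing each obstruction through minor-monotonicity of $\boxempty$ together with the fiber subgraph inclusions (e.g.\ $K_4 \boxempty P_2 \leq K_4 \boxempty P_3$) makes that implicit argument explicit, and the $P_2$, $P_3$, and four-vertex cases of your trace are accurate.

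There is, however, one slip in your $C_3$ case: nonprojective planarity of $G \boxempty C_3$ does \emph{not} force an $R$ or $C_4$ minor in $G$. Take $G = K_{1,5}$ or $G = I$: these are trees, so they have no cycle minors at all, yet $G \boxempty C_3$ is nonprojective planar. In the paper's proof of Theorem \ref{thm:N1 embedding C3 Case}, the tree case and the disjoint-cycles case are handled by Lemma \ref{lem:G is a tree P3,C3 case} and Lemma \ref{lem:G is pseudotree - cycles P3,C3 case}, whose proofs produce $K_{1,5}$ or $I$ minors in $G$ and rest on Lemmas \ref{lem:K1,5 box P3 nonembedding in N1} and \ref{lem:I box P3 nonembeding in N1}; so for $H = C_3$ the full obstruction list is $R$, $C_4$, $I$, $K_{1,5}$. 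The omission is harmless to the conclusion, and the patch is the same mechanism you already use: since $P_3$ is a subgraph of $C_3$, one has $I \boxempty P_3 \leq I \boxempty C_3$ and $K_{1,5} \boxempty P_3 \leq K_{1,5} \boxempty C_3$, so these cases still land on $\mathcal{G}_1$ and $\mathcal{E}_3$, leaving the set of six unchanged. With that sentence added, your assembly is complete and coincides with the paper's intended justification.
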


We could also determine a set of Cartesian products of graphs that serve as a sufficient list of forbidden minors for Cartesian products to embed on a fixed surface. We first consider the planar case, and we determine a necessary and sufficient list in this case. Sufficiency is a corollary of the classification of planar Cartesian products given by Behzad and Mahmoodian \cite{BeMa1969}. Here, we use $H \preceq G$ to designate that graph $H$ is a minor of graph $G$. 

\begin{prop}\label{prop:ForbiddenCPforplane}
  The set  $\Forb_{CP}(S^2) = \{ C_3 \boxempty C_3, K_{1,3} \boxempty P_3, K_{2,3} \boxempty P_2, K_4 \boxempty P_2\}$ is a necessary and sufficient set of forbidden Cartesian product minors to classify  planar Cartesian products. 
  \end{prop}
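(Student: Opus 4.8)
The plan is to prove the two directions of the classification separately, relying throughout on the classification of planar Cartesian products due to Behzad and Mahmoodian \cite{BeMa1969} together with the elementary fact that Cartesian products are minor-monotone in each factor: if $G' \preceq G$ then $G' \boxempty H \preceq G \boxempty H$, and symmetrically, since every vertex deletion, edge deletion, or edge contraction performed in $G$ can be mirrored simultaneously across all $H$-fibers of $G \boxempty H$. First I would record that all four listed graphs are genuinely nonplanar: $K_4 \boxempty P_2$ and $K_{2,3} \boxempty P_2$ are nonplanar because $K_4$ and $K_{2,3}$ are not outerplanar (Theorem~\ref{thm:N1 embedding P2 Case}), while $C_3 \boxempty C_3$ and $K_{1,3} \boxempty P_3$ are none of the three planar forms $P_m \boxempty P_n$, $P_m \boxempty C_n$, or (outerplanar)$\boxempty P_2$. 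Since planarity is minor-closed, sufficiency of the set is exactly the contrapositive statement that every nonplanar $G \boxempty H$ contains one of the four graphs as a minor.

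For sufficiency I would argue by cases on the smaller factor, using commutativity to assume $|V(G)| \le |V(H)|$. If $|V(G)| = 2$, then $G = P_2$, and nonplanarity of $H \boxempty P_2$ forces $H$ to be non-outerplanar by Theorem~\ref{thm:N1 embedding P2 Case}; hence $H$ has a $K_4$ or $K_{2,3}$ minor, and minor-monotonicity yields a $K_4 \boxempty P_2$ or $K_{2,3} \boxempty P_2$ minor. If instead both factors have at least three vertices, the Behzad--Mahmoodian list shows the only planar possibilities are $P_m \boxempty P_n$ and $P_m \boxempty C_n$; thus nonplanarity forces either that both factors contain a cycle or that some factor is neither a path nor a cycle. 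In the first case each factor has a $C_3$ minor (contract a shortest cycle to a triangle), so $C_3 \boxempty C_3 \preceq G \boxempty H$. In the second case the offending factor has a vertex of degree at least three, hence a $K_{1,3}$ minor, while the other factor, being connected on at least three vertices, contains a $P_3$; this gives $K_{1,3} \boxempty P_3 \preceq G \boxempty H$. These cases are exhaustive, completing sufficiency.

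For necessity I would show the four graphs form an antichain in the minor order, so that deleting any one $F$ leaves $F$ itself as a nonplanar product containing none of the remaining forbidden minors, contradicting classification by the smaller set. The key tool is that $M \preceq G$ forces $|V(M)| \le |V(G)|$, $|E(M)| \le |E(G)|$, and $\beta(M) \le \beta(G)$, where $\beta = |E| - |V| + 1$ denotes the cycle rank of a connected graph, a minor-monotone quantity. Tabulating $(|V|, |E|, \beta)$ as $(8,16,9)$, $(9,18,10)$, $(10,17,8)$, and $(12,17,6)$ for $K_4 \boxempty P_2$, $C_3 \boxempty C_3$, $K_{2,3} \boxempty P_2$, and $K_{1,3} \boxempty P_3$ respectively, these three invariants eliminate every possible containment except $K_4 \boxempty P_2 \preceq C_3 \boxempty C_3$. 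Most exclusions are immediate from vertex or edge counts, but I want to emphasize that the cycle rank is doing essential work: it is precisely what rules out $K_4 \boxempty P_2 \preceq K_{2,3} \boxempty P_2$, $K_4 \boxempty P_2 \preceq K_{1,3} \boxempty P_3$, and $K_{2,3} \boxempty P_2 \preceq K_{1,3} \boxempty P_3$, for which the vertex and edge counts alone are consistent.

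The single containment left open by these invariants, and the main obstacle, is $K_4 \boxempty P_2 \preceq C_3 \boxempty C_3$, where all three invariants are compatible with the containment. Here I would exploit that both graphs are $4$-regular. Since $|V(C_3 \boxempty C_3)| - |V(K_4 \boxempty P_2)| = 1$, a minor model reaching $8$ vertices either contracts one edge and deletes none, or deletes one vertex and contracts none; but deleting any vertex of the $4$-regular graph $C_3 \boxempty C_3$ removes four edges, leaving only $14 < 16$ edges, so the second option is impossible. Thus exactly one edge is contracted and no edges may then be deleted, as the result already has only $16$ edges. By the vertex- and edge-transitivity of $C_3 \boxempty C_3$, contracting any edge identifies exactly one pair of parallel edges and produces a vertex of degree five, yielding an $8$-vertex, $16$-edge graph that is not $4$-regular and hence not $K_4 \boxempty P_2$. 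Therefore no such minor exists, the antichain property holds, and necessity follows.
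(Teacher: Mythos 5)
Your proof is correct, and its overall architecture mirrors the paper's: sufficiency via the Behzad--Mahmoodian classification plus a case analysis, and necessity via an antichain argument that first reduces to a few candidate containments by counting invariants and then kills $K_4 \boxempty P_2 \preceq C_3 \boxempty C_3$ with a regularity argument. The genuine difference is how the residual containments are eliminated. The paper uses only vertex and (strict) edge counts, which leaves three pairs to check, and it rules out $K_4 \boxempty P_2 \preceq K_{2,3} \boxempty P_2$ and $K_4 \boxempty P_2 \preceq K_{1,3}\boxempty P_3$ by noting that each would be achieved by a single minor operation and that one operation on connected graphs satisfies $|E(G)|-|E(G')|\ge |V(G)|-|V(G')|$. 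Your cycle-rank invariant $\beta = |E|-|V|+1$ does the same job more cleanly: it is minor-monotone with no hypothesis on the number of operations (the paper's inequality is exactly $\beta$-monotonicity in disguise), it also handles $K_{2,3}\boxempty P_2 \preceq K_{1,3}\boxempty P_3$ uniformly rather than via strictness of the edge count, and it reduces everything to the single pair $K_4 \boxempty P_2 \preceq C_3\boxempty C_3$, where your argument and the paper's essentially coincide. One point you should state more carefully: your dichotomy ``a minor model reaching $8$ vertices either contracts one edge and deletes none, or deletes one vertex and contracts none'' is valid in the branch-set formulation of minors (the branch sets are either eight singletons with one vertex unused, or seven singletons plus one adjacent pair), but read as a claim about sequences of operations it overlooks the order in which an edge deletion precedes the contraction --- delete one edge of the fiber triangle at the contracted edge, then contract --- which also reaches $8$ vertices and $16$ edges. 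Nothing breaks, since that sequence produces literally the same simple graph as the bare contraction (deleting one edge of a parallel pair and then simplifying is the same as simplifying), and that graph still has a degree-five vertex; but the argument should either be pinned explicitly to the branch-set reading or include this one extra sentence.
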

\begin{proof}
Observe that $C_3 \boxempty C_3,K_{1,3}\boxempty P_3$ and $K_{2,3} \boxempty P_2$ each contain a $K_{3,3}$ minor, and $K_{4} \boxempty P_2$ contains a $K_5$ minor. Hence, they are all nonplanar. Next, we need to justify this is a sufficient list of forbidden minors. That is, if $G \boxempty H$ is nonplanar then $G \boxempty H$ contains a minor from this list. This follows from the proof given in \cite{ImKlRa2008} of the classification of planar Cartesian products. First, if both $G$ and $H$ only have vertices of degree one or two then $G \boxempty H$ is either planar or contains a $C_3 \boxempty C_3$ minor. Now suppose one factor has a vertex of at least degree three, say $G$. Then $H$ either contains a $P_3$ subgraph or $H= P_2$. If $H$ contains a $P_3$ subgraph, then $G \boxempty H$ contains a $K_{1,3} \boxempty P_3$ subgraph. If $H = P_2$, then $G \boxempty P_2$ is planar if and only if $G$ is outerplanar, which holds if and only if $G$ does not contain a $K_4$ or $K_{2,3}$ minor.

We now justify this set is necessary. Note that all four of these graphs are distinct since they each have a distinct number of vertices. It suffices to show that if $G, G' \in \Forb_{CP}(S^2)$ and $G \neq G'$, then $G' \not \preceq G$. Observe that any minor operation applied to a connected graph removes at least one edge, while either reducing or preserving the vertex count. These observations provide two immediate constraints on minors of connected graphs. If $G'$ is a proper minor of $G$,  then $|V(G)| \geq |V(G')|$, and $|E(G)| > |E(G')|$. As a result of these constraints, we only need to check that $K_4 \boxempty P_2 \not\preceq C_3 \boxempty C_3$, $K_4 \boxempty P_2 \not\preceq K_{2,3} \boxempty P_2$, and $K_4\boxempty P_2\not\preceq K_{1,3}\boxempty P_3$. 

We first suppose that $K_4 \boxempty P_2 \preceq C_3 \boxempty C_3$ and show this leads to a contradiction. Notice that each of these graphs is 4-regular, $|V(C_3 \boxempty C_3)|-|V(K_4 \boxempty P_2)|=1$, and $|E(C_3 \boxempty C_3)|-|E(K_4 \boxempty P_2)|=2$. Thus, we must perform exactly one edge contraction or exactly one vertex deletion (and possibly an edge deletion in either case) on $C_3 \boxempty C_3$  to obtain a $K_4 \boxempty P_2$ minor. If we edge contract, we create a vertex $v_1$ with $\deg(v_1)=6$ and we reduce the edge count by one. Even with an edge deletion, $v_1$ will have a degree of at least five, so this minor of $C_3\boxempty C_3$ cannot be 4-regular. Performing these operations in reverse still creates the same issue. Instead, if we delete a vertex and possibly delete an edge (or vice versa) of $C_3 \boxempty C_3$, then we remove at least four edges from $C_3\boxempty C_3$. Thus, the minor of $C_3 \boxempty C_3$ that we created has at most $14$ edges, but $|E(K_4 \boxempty P_2)| = 16$, which is a contradiction. Therefore, we have that $K_4 \boxempty P_2 \not\preceq C_3 \boxempty C_3$. 

For the other two cases, suppose $K_4 \boxempty P_2 \preceq K_{2,3} \boxempty P_2$ and $K_4\boxempty P_2\preceq K_{1,3}\boxempty P_3$. Since $|E(K_{2,3} \boxempty P_2)| - |E(K_4 \boxempty P_2)| =1$ and $|E(K_{1,3} \boxempty P_3)| - |E(K_4 \boxempty P_2)| =1$, we see that only one minor operation would be used in both cases. We now note that if  $G'$ is a minor of $G$  (where both graphs are connected) and only one minor operation is used to construct $G'$ from $G$, then $|E(G)|-|E(G')|\ge|V(G)|-|V(G')|$. This inequality follows from our earlier observations on vertex count and edge count under minor operations and the fact that our graphs are connected. However, in both of our cases (which only require one minor operation on connected graphs), this inequality is violated. Thus, we must have $K_4 \boxempty P_2 \not\preceq K_{2,3} \boxempty P_2$, and $K_4\boxempty P_2\not\preceq K_{1,3}\boxempty P_3$, as needed.
\end{proof}

By combining Proposition \ref{prop:ForbiddenCPforplane} with Theorem \ref{thm:maintheorem} we can provide a succinct characterization of  Cartesian products of graphs that are projective planar but not planar, i.e.,  Cartesian products of graphs with crosscap number one. 


\begin{cor} \label{cor:CrosscapOne}
    The Cartesian product $G \boxempty H$ has crosscap number one if and only if either 
   \begin{enumerate}
    \item $G \boxempty H = K_{1,3} \boxempty K_{1,3}$,
    \item $G \boxempty H = G \boxempty P_3$ where $G$ is a minor of $B_{m}$ or $Q_{m}$ for some $m \in \mathbb{N}$ and $G \boxempty P_3$  contains a minor from $\Forb_{CP}(S^2)$, or
    \item $G \boxempty H = G \boxempty C_{3}$ where $G$ is a minor of $W_{m}$ or $X_{m}$ for some $m \in \mathbb{N}$ and $G \boxempty C_{3}$ contains a minor from $\Forb_{CP}(S^2)$. 
   \end{enumerate} 
\end{cor}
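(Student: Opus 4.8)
The plan is to combine Theorem \ref{thm:maintheorem} with Proposition \ref{prop:ForbiddenCPforplane} in a direct way. Recall that the crosscap number of a graph is one precisely when the graph is projective planar but not planar. So the statement to prove is simply the conjunction of two conditions: $G \boxempty H$ is projective planar and $G \boxempty H$ is not planar. My approach is to start from Theorem \ref{thm:maintheorem}, which gives four mutually exhaustive possibilities for projective planarity, and then intersect each possibility with the requirement of nonplanarity, using Proposition \ref{prop:ForbiddenCPforplane} to detect nonplanarity via the forbidden set $\Forb_{CP}(S^2)$.

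First I would prove the forward direction. Assume $G \boxempty H$ has crosscap number one. By definition it is projective planar, so Theorem \ref{thm:maintheorem} applies and $G \boxempty H$ falls into one of its four cases. Case (1) of Theorem \ref{thm:maintheorem}, namely $G \boxempty H$ being planar, is immediately excluded because crosscap number one means nonplanar. This leaves exactly cases (2), (3), and (4) of Theorem \ref{thm:maintheorem}, which correspond to items (1), (2), and (3) of the corollary. In case (2), $G \boxempty H = K_{1,3} \boxempty K_{1,3}$, and one checks this is nonplanar (it contains a $K_{1,3} \boxempty P_3$ minor, hence a $K_{3,3}$ minor by Proposition \ref{prop:ForbiddenCPforplane}), so no further condition is needed and we land in item (1). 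In the $P_3$ and $C_3$ cases, projective planarity already forces $G$ to be a minor of $B_m$ or $Q_m$ (respectively $W_m$ or $X_m$); the extra requirement that $G \boxempty H$ be \emph{nonplanar} is, by Proposition \ref{prop:ForbiddenCPforplane}, exactly the statement that $G \boxempty H$ contains a minor from $\Forb_{CP}(S^2)$. This gives items (2) and (3).

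For the backward direction I would argue that each of the three listed conditions produces a projective planar graph that is not planar. In every listed case the graph satisfies one of the four conditions of Theorem \ref{thm:maintheorem}, so $G \boxempty H$ is projective planar. For item (1), $K_{1,3} \boxempty K_{1,3}$ is nonplanar as noted above, so its crosscap number is at least one, and since it is projective planar its crosscap number is exactly one. For items (2) and (3), the explicit hypothesis that $G \boxempty H$ contains a minor from $\Forb_{CP}(S^2)$ forces nonplanarity by Proposition \ref{prop:ForbiddenCPforplane}, while membership in the $B_m/Q_m$ or $W_m/X_m$ families gives projective planarity by Theorem \ref{thm:maintheorem}; together these yield crosscap number exactly one.

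The only genuinely delicate point, and the step I would treat most carefully, is the clean separation of item (1) from items (2) and (3). One must confirm that $K_{1,3} \boxempty K_{1,3}$ is correctly handled by a standalone clause rather than being absorbed into the $\Forb_{CP}(S^2)$ condition of the other two items, and that the $P_3$ and $C_3$ families do not accidentally overlap with the $K_{1,3} \boxempty K_{1,3}$ case in a way that double-counts. Since $K_{1,3}\boxempty K_{1,3}$ has both factors with four vertices, it cannot appear in items (2) or (3), which require a $P_3$ or $C_3$ factor, so the three cases are genuinely disjoint and exhaustive over the projective-planar-but-nonplanar graphs. I would also make explicit that Proposition \ref{prop:ForbiddenCPforplane} is an ``if and only if'' characterization of planarity for Cartesian products, which is what legitimizes rephrasing ``nonplanar'' as ``contains a minor from $\Forb_{CP}(S^2)$'' in both directions of the argument.
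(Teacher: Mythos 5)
Your proposal is correct and follows exactly the route the paper intends: the corollary is presented there as an immediate consequence of combining Theorem \ref{thm:maintheorem} (characterizing projective planarity) with Proposition \ref{prop:ForbiddenCPforplane} (characterizing nonplanarity via $\Forb_{CP}(S^2)$), which is precisely the intersection argument you spell out. Your additional checks --- the nonplanarity of $K_{1,3} \boxempty K_{1,3}$ via a $K_{1,3} \boxempty P_3$ subgraph and the disjointness of the three cases --- are sound and consistent with the paper's remarks following the corollary.
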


We note that any $B_m \boxempty P_3$, $Q_m \boxempty P_3$, $W_m \boxempty C_3$, and $X_m \boxempty C_3$ has crosscap number one for any $m \in \mathbb{N}$. Such graphs contain either a $K_{1,3} \boxempty P_3$ subgraph or a  $C_{3} \boxempty C_3$ subgraph, justifying that they are nonplanar.

For the next corollary, we refer the reader to Figure \ref{fig:Relevant graphs P3,C3 case} for visuals of $I$, $L$, $S$, and $R$. 

\begin{cor}
    \label{cor:MMNPP2}
    The set $\Forb_{CP}(N_1) = \{K_{2,3} \boxempty P_{2},  K_{4} \boxempty P_{2}, I \boxempty P_3, K_{1,5} \boxempty P_3, L \boxempty P_3, S \boxempty P_3, R \boxempty C_3, P_4 \boxempty K_{1,3}, K_{1,4} \boxempty K_{1,3} \}$ is a sufficient set of forbidden Cartesian product minors to classify projective planar Cartesian products. 
\end{cor}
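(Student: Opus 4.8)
The plan is to read the corollary as the assertion that $G \boxempty H$ is projective planar if and only if it contains no minor from $\Forb_{CP}(N_1)$, and to extract both directions from the case analysis already carried out for Theorem \ref{thm:maintheorem}. The forward direction is immediate: every graph in the list has been shown to be nonprojective planar (by Theorem \ref{thm:N1 embedding P2 Case} for $K_{2,3}\boxempty P_2$ and $K_4 \boxempty P_2$; by Lemmas \ref{lem:I box P3 nonembeding in N1}, \ref{lem:K1,5 box P3 nonembedding in N1}, \ref{lem:Demise of Evil Lollipop}, and \ref{lem:S box P3 nonembeding in N1} for the $P_3$ products; by Lemma \ref{lem: R box C3 nonembedding in N1} for $R \boxempty C_3$; and by Lemmas \ref{lem:P4 box K1,3 nonembedding in N1} and \ref{lem:K1,4 box K1,3 doesn't embed} for the $K_{1,3}$ products). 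Since projective planarity is minor-closed, a projective planar $G \boxempty H$ can contain none of the nine as a minor.

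For the reverse direction I would assume $G \boxempty H$ is nonprojective planar and produce a listed minor, organizing the argument by $\min(|V(G)|,|V(H)|)$ exactly as in the proof of Theorem \ref{thm:maintheorem}. First I would isolate the monotonicity fact that $G' \preceq G$ implies $G' \boxempty K \preceq G \boxempty K$, so that a forbidden minor in a factor propagates to the product; this is routine from the fiber description. With it, each nonembedding case named earlier already supplies a forbidden configuration: the $P_2$ case gives $K_4 \boxempty P_2$ or $K_{2,3}\boxempty P_2$; the $P_3$ case (Theorem \ref{thm:N1 embedding P3 Case}) produces a $K_4$, $K_{2,3}$, $I$, $K_{1,5}$, $S$, or $L$ minor in $G$, hence (using $P_2 \preceq P_3$) one of $K_4 \boxempty P_2$, $K_{2,3}\boxempty P_2$, $I \boxempty P_3$, $K_{1,5}\boxempty P_3$, $S \boxempty P_3$, or $L \boxempty P_3$; the $C_3$ case (Theorem \ref{thm:N1 embedding C3 Case}) produces an $R$ minor in $G$ (giving $R \boxempty C_3$), a $C_4$ minor (treated below), or a $K_{1,5}$ or $I$ minor that reduces through $P_3 \preceq C_3$ to the listed $P_3$ products; and the four-vertex case (Theorem \ref{thm:N1 embedding 4 Vertex Case}) gives a $P_4 \boxempty K_{1,3}$ or $K_{1,4}\boxempty K_{1,3}$ minor.

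The main obstacle is that two intermediate obstructions used inside those proofs, $C_4 \boxempty C_3$ and $C_4 \boxempty C_4$ (Lemma \ref{lem:C4 box C3 nonembedding in N1}), do not themselves appear on the list, so I must show each carries a listed minor. Since $C_3 \preceq C_4$, the monotonicity fact gives $C_4 \boxempty C_3 \preceq C_4 \boxempty C_4$, so it suffices to treat $C_4 \boxempty C_3$, and here I would prove the key lemma $K_4 \boxempty P_2 \preceq C_4 \boxempty C_3$ by an explicit contraction. Writing the vertices of $C_4 \boxempty C_3$ as $(i,r)$ with $i$ on the $4$-cycle $\{1,2,3,4\}$ and $r \in \{a,b,c\}$, I would contract the four edges $(1,b)(2,b)$, $(1,c)(2,c)$, $(3,b)(4,b)$, $(3,c)(4,c)$; a direct check then shows each of the two column blocks $\{(1,\cdot),(2,\cdot)\}$ and $\{(3,\cdot),(4,\cdot)\}$ becomes a $K_4$ and that exactly four edges survive between the blocks, forming the matching of $K_4 \boxempty P_2$. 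Hence whenever the classification invokes a $C_4 \boxempty C_3$ or $C_4 \boxempty C_4$ obstruction, the product already contains $K_4 \boxempty P_2$.

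To finish, I would observe that by Theorem \ref{thm:maintheorem} every nonprojective-planar $G \boxempty H$ falls into one of the cases above, each of which has been shown to contain a member of $\Forb_{CP}(N_1)$ as a minor; together with the forward direction this proves that the nine graphs form a sufficient set of forbidden Cartesian product minors. The only genuinely new input beyond quoting the earlier theorems is the contraction verifying $K_4 \boxempty P_2 \preceq C_4 \boxempty C_3$, which I expect to be the crux.
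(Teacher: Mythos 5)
Your proposal is correct and takes essentially the same approach as the paper: first show each of the nine listed products is nonprojective planar, then retrace the classification's case analysis and check that every obstruction invoked there already lies on the list, with the sole exception of $C_4 \boxempty C_3$ (and $C_4 \boxempty C_4$), which must be reduced to $K_4 \boxempty P_2$. Your explicit contraction establishing $K_4 \boxempty P_2 \preceq C_4 \boxempty C_3$ checks out (the two contracted column blocks do become disjoint $K_4$'s joined by a perfect matching, which is exactly $K_4 \boxempty P_2$), and in fact supplies a verification that the paper only asserts without proof.
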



\begin{proof}
     First, every $G \in \Forb_{CP}(N_1)$ is nonprojective planar -  one can find an element of $\Forb(N_1)$ from Appendix \ref{sec:Appendix} as a minor of any such $G$, and each such case was covered in an earlier section of this paper. We now show that $\Forb_{CP}(N_1)$ is a sufficient set of forbidden minors. We follow our classification results to justify this inclusion.  From Section \ref{sec:ClassificationII},  $G \boxempty P_2$ is projective planar if and only if  $G$ does not contain a $K_4$ or $K_{2,3}$ minor. Thus, $K_{4} \boxempty P_2$ and $K_{2,3} \boxempty P_2$ must be on our list.    In Section \ref{sec:ClassificationIII}, we see $K_{4} \boxempty P_2$, $K_{2,3} \boxempty P_2$, $I \boxempty P_3, K_{1,5} \boxempty P_3, L \boxempty P_3, S \boxempty P_3,  C_4 \boxempty C_3$ and  $R \boxempty C_3$ are used in the proofs of Theorem \ref{thm:N1 embedding P3 Case} and Theorem \ref{thm:N1 embedding C3 Case} to classify projective planar Cartesian products where one factor has exactly three vertices. The only one of these graphs not in $\Forb_{CP}(N_1)$ is $C_4 \boxempty C_3$, which contains a $K_{4} \boxempty P_2$ minor, and $K_{4} \boxempty P_{2} \in \Forb_{CP}(N_1)$. Finally, in Section \ref{sec:ClassificationIV}, we  used the facts that $P_4 \boxempty K_{1,3}$ or  $K_{1,4} \boxempty K_{1,3}$ are nonprojective planar in the proof of Theorem \ref{thm:N1 embedding 4 Vertex Case} to classify Cartesian products where each factor has at least four vertices.
\end{proof}

It is quite possible that every element of  $Forb_{CP}(N_1)$ is necessary. One could check necessity in a manner similar to the proof of Proposition \ref{prop:ForbiddenCPforplane}, though there would be several minor inclusions to rule out here. 
     


\section{Conclusion}

It is natural to ask if one can classify Cartesian products of graphs that embed in $S$, where $S$ is a surface other than the $2$-sphere or projective plane.  This could be a challenging task since $\Forb(S)$ is not known for any such surface, and in some cases, $\Forb(S)$ is known to have large subsets. For instance, there are at least $17,000$ distinct forbidden minimal minors for the torus; see \cite{MyWo218}.  However, only six of the $35$ elements in $Forb(N_1)$ where sufficient for our work. Thus, restricting to  Cartesian products  or Cartesian products with specific characteristics (perhaps  a fixed vertex connectivity for one factor) could make this analysis more tractable for the next lowest complexity surfaces, the torus and the Klein bottle. 

 \newpage

\appendix
\section{Minimal Forbidden Minors of the Projective Plane}
\label{sec:Appendix}
The following figure depicts the Forbidden minimal minors of the projective plane used throughout this paper. The labels follow those given by the list in Appendix A of  \cite{MoTh2001}.
\begin{figure}[H]
    \centering
    \begin{overpic}[width=0.8\linewidth,grid=False]{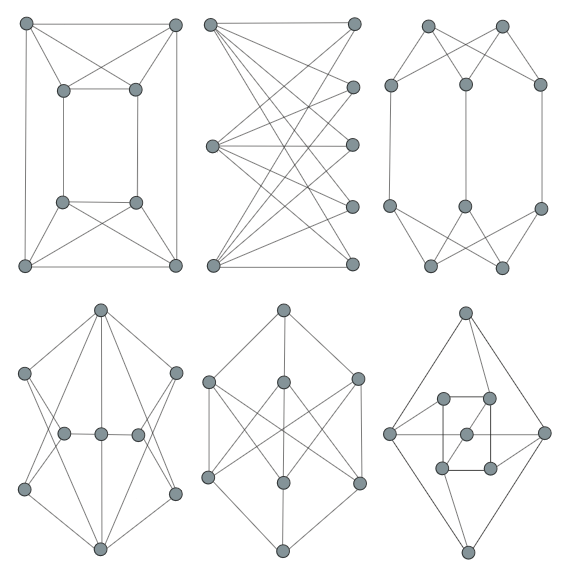}
        \put(15.5,97.5){$\mathcal D_{17}$}
        \put(47.5,97.5){$\mathcal E_3$}
        \put(79,97.5){$\mathcal G_1$}
        \put(15.5,48.5){$\mathcal E_5$}
        \put(47,48.5){$\mathcal E_{18}$}
        \put(78,48.5){$\mathcal E_{22}$}
    \end{overpic}
    \caption{The six graphs from $\Forb(N_1)$ that are sufficient to classify projective planar Cartesian products.}
    \label{fig:RelevantForbiddenMinorsofN1}
\end{figure}

\newpage


\bibliographystyle{hamsplain}
\bibliography{biblio}

\providecommand{\bysame}{\leavevmode\hbox to3em{\hrulefill}\thinspace}
\providecommand{\href}[2]{#2}
\begin{thebibliography}{10}

\bibitem{Ar1980}
Dan~Steven Archdeacon, \emph{A {KURATOWSKI} {THEOREM} {FOR} {THE} {PROJECTIVE}
  {PLANE}}, ProQuest LLC, Ann Arbor, MI, 1980, Thesis (Ph.D.)--The Ohio State
  University.

\bibitem{BeMa1969}
M.~Behzad and S.~E. Mahmoodian, \emph{On topological invariants of the product
  of graphs}, Canad. Math. Bull. \textbf{12} (1969), 157--166.

\bibitem{ChHa1967}
Gary Chartrand and Frank Harary, \emph{Planar permutation graphs}, Ann. Inst.
  H. Poincar\'e{} Sect. B (N.S.) \textbf{3} (1967), 433--438.

\bibitem{GrTu2001}
Jonathan~L. Gross and Thomas~W. Tucker, \emph{Topological graph theory}, Dover
  Publications, Inc., Mineola, NY, 2001, Reprint of the 1987 original [Wiley,
  New York; MR0898434 (88h:05034)] with a new preface and supplementary
  bibliography.

\bibitem{ImKlRa2008}
Wilfried Imrich, Sandi Klav\v{z}ar, and Douglas~F. Rall, \emph{Topics in graph
  theory}, A K Peters, Ltd., Wellesley, MA, 2008, Graphs and their Cartesian
  product.

\bibitem{Ku1930}
Casimir Kuratowski, \emph{Sur le probl\`eme des courbes gauches en topologie},
  Fund. Math. \textbf{15} (1930), 271--283.

\bibitem{Ma2017}
Thomas~W. Mattman, \emph{Forbidden minors: finding the finite few}, A primer
  for undergraduate research, Found. Undergrad. Res. Math.,
  Birkh\"auser/Springer, Cham, 2017, pp.~85--97.

\bibitem{MoTh2001}
Bojan Mohar and Carsten Thomassen, \emph{Graphs on surfaces}, Johns Hopkins
  Studies in the Mathematical Sciences, Johns Hopkins University Press,
  Baltimore, MD, 2001.

\bibitem{MyWo218}
Wendy Myrvold and Jennifer Woodcock, \emph{A large set of torus obstructions
  and how they were discovered}, Electron. J. Combin. \textbf{25} (2018),
  no.~1, Paper No. 1.16, 17.

\bibitem{RoSe2004}
Neil Robertson and P.~D. Seymour, \emph{Graph minors. {XX}. {W}agner's
  conjecture}, J. Combin. Theory Ser. B \textbf{92} (2004), no.~2, 325--357.

\bibitem{Wa1937}
K.~Wagner, \emph{\"uber eine {E}igenschaft der ebenen {K}omplexe}, Math. Ann.
  \textbf{114} (1937), no.~1, 570--590.

\end{thebibliography}
\end{document}